\tikzstyle{vertex}=[circle,draw=black,fill=black,inner sep=0,minimum size=3pt,text=white,font=\footnotesize]
\date{}
\title{\vspace{-0.8cm}Ordered graphs and large bi-cliques in intersection graphs of curves}
\author{
	J\'{a}nos Pach \thanks{\'{E}cole Polytechnique F\'{e}d\'{e}rale de Lausanne, Research partially supported by Swiss National Science Foundation grants no. 200020-162884 and 200021-175977. \emph{e-mail}: \textbf{\{janos.pach, istvan.tomon\}@epfl.ch}} \thanks{R\'enyi Institute of Hungarian Academy of Sciences}
	\and
	Istv\'{a}n Tomon \footnotemark[1]	
}
\theoremstyle{plain}
\newtheorem{theorem}{Theorem}
\newtheorem{definition}[theorem]{Definition}
\newtheorem{corollary}[theorem]{Corollary}
\newtheorem{claim}[theorem]{Claim}
\newtheorem{lemma}[theorem]{Lemma}
\newtheorem{conjecture}[theorem]{Conjecture}
\theoremstyle{definition}
\begin{document}

\maketitle
\sloppy

\begin{abstract}
An {\em ordered graph} $G_<$ is a graph with a total ordering $<$ on its vertex set. A {\em monotone path} of length $k$ is a sequence of vertices $v_1<v_2<\ldots<v_k$ such that $v_iv_{j}$ is an edge of $G_<$ if and only if $|j-i|=1$. A {\em bi-clique} of size $m$ is a complete bipartite graph whose vertex classes are of size $m$.

We prove that for every positive integer $k$, there exists a constant $c_k>0$ such that every ordered graph on $n$ vertices that does not contain a monotone path of length $k$ as an induced subgraph has a vertex of degree at least $c_kn$, or its complement has a bi-clique of size at least $c_kn/\log n$. A similar result holds for ordered graphs containing no induced ordered subgraph isomorphic to a fixed ordered matching.

As a consequence, we give a short combinatorial proof of the following theorem of Fox and Pach. There exists a constant $c>0$ such the intersection graph $G$ of any collection of $n$ $x$-monotone curves in the plane has a bi-clique of size at least $cn/\log n$ or its complement contains a bi-clique of size at least $cn$. (A curve is called $x$-monotone if every vertical line intersects it in at most one point.) We also prove that if $G$ has at most $\left(\frac14 -\epsilon\right){n\choose 2}$ edges for some $\epsilon>0$, then     $\overline{G}$ contains a linear sized bi-clique. We show that this statement does not remain true if we replace $\frac14$ by any larger constants.
\end{abstract}

\section{Introduction}
There are a growing number of examples showing that ordered structures can be useful for solving geometric and topological problems that appear to be hard to analyze by traditional combinatorial methods. The aim of the present note is to provide an example concerning intersection patterns of curves, where one can apply ordered graphs.

First, we agree on the terminology. An \emph{ordered graph} $G_{<}$ is a graph $G$ with a total ordering $<$ on its vertex set. If the ordering $<$ is clear from the context, we write $G$ instead of $G_{<}$. An ordered graph $H_{<'}$ is an {\em induced subgraph} of the ordered graph $G_{<}$, if there exists an embedding $\phi: V(H)\rightarrow V(G)$ such that for every $u,v\in V(H)$, if $u<'v$ then $\phi(u)<\phi(v)$, and $uv\in E(H)$ if and only if $\phi(u)\phi(v)\in E(G)$.

A {\em monotone path} $P_k$ of length $k$ is an ordered graph with $k$ vertices $v_1<v_2<\ldots<v_k$ in which  $v_iv_j$ is an edge if and only if $|j-i|=1$. A \emph{bi-clique} in an (ordered or unordered) graph $G$ consists of a pair of disjoint subsets of the vertices $(A,B)$ such that $|A|=|B|$ and for every $a\in A$ and $b\in B$, there is an edge between $a$ and $b$. The size of a bi-clique $(A,B)$ is $|A|$.  A \emph{comparability graph} is a graph $G$ for which there exists a partial ordering on $V(G)$ such that two vertices are joined by an edge of $G$ if and only if they are comparable by this partial ordering. An \emph{incomparability graph} is the complement of a comparability graph. The maximum degree of the vertices of $G$ is denoted by $\Delta(G)$.

Our first theorem states that if a $P_k$-free ordered graph is not too dense, then its complement contains a large bi-clique.

\begin{theorem}\label{thm:path}
For every integer $k\ge 2$, there exists a constant $c=c(k)>0$ such that the following statement is true. Let $G_<$ be an ordered graph on $n$ vertices which satisfies $\Delta(G_<)<cn$ and does not have any induced ordered subgraph isomorphic to the monotone path $P_{k}$ of length $k$.

Then the complement of $G_<$ contains a bi-clique of size at least $cn/\log n$.
\end{theorem}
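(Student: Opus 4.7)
I would proceed by induction on $k$.

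\textbf{Base case} ($k=2$). A $P_2$-free ordered graph has no edges, so its complement is $K_n$ and contains a bi-clique of size $\lfloor n/2 \rfloor$; any $c\le 1/2$ suffices.

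\textbf{Inductive step} ($k \ge 3$). Assume the result holds for $k-1$ with constant $c_{k-1}>0$, and let $G$ be a $P_k$-free ordered graph on $n$ vertices with $\Delta(G)<c_k n$, where $c_k$ will be chosen small. The strategy is to pass to a large induced subgraph $G[W]$ that is $P_{k-1}$-free, and then invoke the inductive hypothesis. Concretely, I would look for an anchor vertex $v^*\in V(G)$ together with a subset $W\subseteq V(G)\setminus\{v^*\}$ of vertices to the right of $v^*$ in the order, with $|W|=\Omega(n)$, such that every induced $P_{k-1}$ in $G[W]$ extends via $v^*$ to an induced $P_k$ in $G$. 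The $P_k$-free hypothesis then forces $G[W]$ to contain no induced $P_{k-1}$, and the inductive hypothesis yields a bi-clique of size $c_{k-1}|W|/\log|W| \ge c_k n/\log n$ inside $\overline{G[W]}\subseteq\overline{G}$.

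To locate the pair $(v^*,W)$, I would use a sampling/pigeonhole argument: take a small constant-size sample $S\subseteq V(G)$, classify the remaining vertices by their adjacency trace $N(u)\cap S$, pick $v^*\in S$, and let $W$ be a largest class. There are only $2^{|S|}=O(1)$ classes, so $|W|=\Omega(n)$, and the sparsity hypothesis $\Delta(G)<c_k n$ ensures this extraction does not further deplete $W$ below a constant fraction of $n$. The vertices of $W$ share a common adjacency pattern to $S$, which one would use to enforce the ``$P_{k-1}$ prepended by $v^*$ is an induced $P_k$'' property.

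\textbf{Expected main obstacle.} The technical heart of the argument is arranging the adjacencies between $v^*$ and $W$ so that an induced $P_{k-1}$ in $W$ really extends to an induced $P_k$ in $G$. For this, $v^*$ should be adjacent to the \emph{first} vertex of each would-be $P_{k-1}$ in $W$ but non-adjacent to its other vertices; this mixed pattern cannot be enforced uniformly on $W$ by a single anchor, since all vertices of $W$ share the same relation to $v^*$. The likely remedy is to use several anchors $v_1^*<v_2^*<\cdots$ simultaneously, each responsible for one position along the forbidden $P_k$, and to pigeonhole over their joint adjacency patterns. The $\log n$ factor in the final bi-clique bound presumably enters not from the outer induction on $k$ (which loses only constants) but either from an iterated version of this multi-anchor extraction or from a final bipartite Turán-type step that converts the structured subgraph obtained after extraction into a genuine bi-clique; isolating exactly where this $\log n$ loss enters is the subtlest part of the proof.
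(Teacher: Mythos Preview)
Your inductive framework has a genuine gap at exactly the point you flag. The trace-pigeonhole extraction forces every vertex of $W$ to have the \emph{same} adjacency to $v^*$, so either $v^*$ is joined to all of $W$ or to none of it; in neither case can prepending $v^*$ to an induced $P_{k-1}$ inside $W$ produce an induced $P_k$. Your proposed remedy of several anchors $v_1^*<\cdots<v_r^*$ does not escape this: to make $v_1^*,\ldots,v_r^*$ followed by an arbitrary induced $P_{k-r}$ in $W$ into an induced $P_k$, you would still need $v_r^*$ adjacent to the first vertex of that $P_{k-r}$ and non-adjacent to the rest---the same obstruction one level down. Nothing in your plan breaks this circularity, and you also leave the origin of the $\log n$ loss unspecified.

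The paper does not argue by induction on $k$. It introduces, for $v\in V(G)$ and $T\subset V(G)$, the set $P(v,T)$ of vertices reachable from $v$ by an increasing path whose internal vertices all lie in $T$, and proves a key lemma: if $S<T$ with $|S|\gtrsim n/\log n$ and $|T|\gtrsim n$, then either some $v\in S$ has $|P(v,T)|\gtrsim n$, or $\overline G$ already contains a bi-clique of size $\gtrsim n/\log n$. The $\log n$ loss enters precisely here, through a halving argument that shrinks $S$ down to a single vertex over $\log_2|S|$ rounds, each round consuming one block of $T$. The theorem then follows by cutting $V(G)$ into $k$ consecutive intervals $A_1,\ldots,A_k$ and building $x_1<\cdots<x_k$ one at a time: the lemma supplies a vertex $w\in A_{l+1}$ reachable from $x_{l-1}$ by a monotone path lying in $U_{l+1}=V(G)\setminus\bigcup_{i\le l-1}N(x_i)$; choosing a \emph{shortest} such path and letting $x_l$ be its second vertex yields the induced-path property automatically, since minimality eliminates all chords back to $x_{l-1}$, and membership in $U_{l+1}$ eliminates edges to $x_1,\ldots,x_{l-2}$. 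This shortest-monotone-path device is the idea missing from your plan.
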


For the conclusion to hold, we need some upper bound on the degrees of the vertices (or on the number of edges) of the graph. To see this, consider the graph $G$ on the naturally ordered vertex set $\{1,\ldots,n\}$, in which $A=\{1,\ldots,\lfloor n/2\rfloor\}$ and $B=\{\lfloor n/2\rfloor +1,\ldots, n\}$ induce complete subgraphs, and any pair of vertices $a\in A, b\in B$ are joined by an edge randomly, independently with a very small probability $p>0$. This ordered graph has no induced monotone path of length $5$, its maximum degree satisfies $\Delta(G)<(1/2+p)n$, but the maximum size of a bi-clique in its complement is $O_p(\log n)$. Consequently, for the constant appearing in Theorem~\ref{thm:path}, we have $c_5\le 1/2$.

The assumption that $G_<$ contains no induced $P_3$ is equivalent to the property that $G_<$ is a comparability graph. In this special case (that is, for $k=3$), Theorem~\ref{thm:path} was established by Fox, Pach, and T\'oth \cite{FPT10}, and in a weaker form by Fox \cite{F06}. Apart from the value of the constant $c$, the bound is best possible for $k=3$ and, hence, for every $k\ge 3$.

An {\em ordered matching} is an ordered graph on $2k$ vertices which consists of $k$ edges, no two of which share an endpoint. Our next result is an analogue of Theorem~\ref{thm:path} for ordered graphs that contain no induced subgraph isomorphic to a fixed ordered matching.

\begin{theorem}\label{thm:matching}
For every ordered matching $M$, there exists a constant $c=c(M)>0$ such the following statement is true. Let $G_<$ be an ordered graph on $n$ vertices which satisfies $\Delta(G_<)<cn$ and does not have any induced ordered subgraph isomorphic to $M$.

Then the complement of $G_<$ contains a bi-clique of size at least $cn$.
\end{theorem}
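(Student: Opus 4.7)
The plan is to proceed by induction on the number $k$ of edges of $M$. The base case $k=1$ is trivial: $M$ is a single edge, so $M$-freeness forces $G_<$ to be edgeless, and $\overline{G}_<$ contains a bi-clique of size $\lfloor n/2\rfloor$. For the inductive step, let $M$ have vertices $w_1<\cdots<w_{2k}$, let $w_j$ be the partner of $w_1$, and set $M':=M-\{w_1,w_j\}$, an ordered matching with $k-1$ edges. Let $c':=c(M')$ denote the constant supplied by the inductive hypothesis.

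The argument splits into two cases. In the first case, suppose there is a subset $V'\subseteq V(G)$ with $|V'|\geq n/2$ such that $G[V']$ is $M'$-free. Choosing $c=c(M)$ small enough relative to $c'$ ensures $\Delta(G[V'])<c'|V'|$, so the inductive hypothesis applied to $G[V']$ yields a bi-clique of size $\geq c'|V'|\geq c'n/2$ in $\overline{G}$, and we are done. Otherwise, every subset of $V(G)$ of size $\geq n/2$ contains an induced $M'$-copy; in particular, $G$ contains many induced $M'$-copies spread through the order. For any such copy $C$, let $A_C$ be the set of vertices preceding all of $C$ in the order that are non-adjacent to every vertex of $C$ (the candidates to play the role of $w_1$), and let $B_C$ be the set of vertices lying strictly between the $(j-1)$-st and $(j+1)$-st vertices of $C$ in the order that are non-adjacent to every vertex of $C$ (candidates for the role of $w_j$). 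Because $G$ is $M$-free, there cannot be an edge of $G$ between $A_C$ and $B_C$: such an edge would complete $C$ to an induced copy of $M$. The bound $\Delta(G)<cn$ together with $|C|=2k-2$ implies that all but at most $(2k-2)cn$ vertices in the relevant intervals lie in $A_C$ (respectively $B_C$). An averaging argument over the abundant $M'$-copies then locates a single copy $C$ for which both the window preceding $C$ and the internal gap at position $j$ have length $\geq \alpha(M)n$ for some constant $\alpha(M)>0$; thus $|A_C|,|B_C|\geq \alpha(M)n-(2k-2)cn\geq cn$ for $c$ small enough, giving the desired linear bi-clique $(A_C,B_C)$ in $\overline{G}$.

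The main obstacle is the averaging argument in the second case: one must simultaneously ensure that the leftmost vertex of the chosen copy $C$ is far from the left end of the order (so $A_C$ is large) \emph{and} that the gap at position $j$ of $C$ is wide (so $B_C$ is large). If the positional distribution of $M'$-copies is pathologically skewed in either direction, a preliminary cleaning step---restricting to a sub-interval or deleting a small set of vertices---is required to make a typical copy satisfy both conditions. A secondary issue is propagating the constant: $c(M)$ must be chosen small enough to survive the restriction to $V'$ in the first case and the losses from averaging and the degree bound in the second case, while still yielding a valid lower bound in the conclusion. These choices reduce to a finite recursion in $k$ starting from $c(M)$ with $k=1$.
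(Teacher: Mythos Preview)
Your inductive scheme has a genuine gap in Case~2: the claimed averaging argument that produces an $M'$-copy $C$ with a wide internal gap at position $j$ need not exist. Here is a concrete obstruction. Take $M=M_1$, the ordered matching on $\{1,2,3,4\}$ with edges $\{1,3\}$ and $\{2,4\}$; then $M'$ is a single edge. Let $G$ be the ordered graph on $[n]$ where $i\sim j$ iff $|i-j|\le \epsilon n$, with $\epsilon>0$ to be chosen small. One checks that $G$ is $M_1$-free (if $a<b<c<d$ with $ac\in E(G)$ then $c-a\le\epsilon n$, forcing $b-a<\epsilon n$, so $ab\in E(G)$), and $\Delta(G)\le 2\epsilon n$. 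The maximum independent set in $G$ has size about $1/\epsilon$, so for large $n$ every subset of size $\ge n/2$ contains an edge and you are in Case~2. However, \emph{every} copy of $M'$ is an edge $\{u,v\}$ with $v-u\le\epsilon n$, so the interval available for $B_C$ has length $<\epsilon n$. No averaging, sub-interval restriction, or small vertex deletion changes this: the graph is translation-homogeneous. Thus $|B_C|<\epsilon n$ for every $C$, and since $\epsilon$ can be taken smaller than any $\alpha(M)$ or $c(M)$ you fix in advance, the method produces no bi-clique of size $cn$. (Of course $\overline G$ does contain a linear bi-clique here, namely between the first and last thirds of $[n]$, but your argument does not find it.)

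The paper's proof avoids this difficulty by a direct, non-inductive argument. It partitions $[n]$ into $2k$ consecutive intervals $A_1,\dots,A_{2k}$, one per vertex of $M$. For each edge $\{a_i,b_i\}$ of $M$, K\"onig's theorem gives either a matching of size $n/(4k)$ between $A_{a_i}$ and $A_{b_i}$, or immediately a bi-clique of that size in $\overline G$. Picking one edge uniformly at random from each such matching and applying a union bound with the degree hypothesis shows that with positive probability the $2k$ chosen vertices induce exactly $M$. The key difference is that the paper builds all $k$ edges of $M$ simultaneously in prescribed intervals, rather than trying to grow an $M'$-copy by one edge; this sidesteps entirely the problem of locating an $M'$-copy with controlled gaps.
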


The conclusion of Theorem~\ref{thm:matching} is stronger than that of Theorem~\ref{thm:path}: in this case we can find a linear-sized bi-clique in the complement of $G_<$.

Given a family of sets, $\mathcal{C}$, the \emph{intersection graph} of $\mathcal{C}$ is the graph, whose vertices correspond to the elements of $\mathcal{C}$, and two vertices are joined by an edge if and only if the corresponding sets have a nonempty intersection. A \emph{curve} is the image of a continuous function $\phi:[0,1]\rightarrow \mathbb{R}^{2}$. A curve is said to be \emph{$x$-monotone} if every vertical line intersects it in at most one point. Note that any convex set can be approximated arbitrarily closely by $x$-monotone curves, so the notion of $x$-monotone curve extends the notion of convex sets. Throughout this paper, a curve will be called a \emph{grounded} if one of its endpoints lies on the $y$-axis (on the vertical line $\{x=0\}$) and the whole curve is contained in the nonnegative half-plane $\{x\geq 0\}$. (By slight abuse of notation, we write $\{x\geq 0\}$ for the set $\{(x,y)\in\mathbb{R}^{2}: x\geq 0\}$.)

We will apply Theorems~\ref{thm:path} and~\ref{thm:matching} to give a simple combinatorial proof for the following Ramsey-type result of Fox and Pach~\cite{FP12}, which is related to a celebrated conjecture of Erd\H os and Hajnal~\cite{EH89, Ch}.

\begin{theorem} \cite{FP12}\label{thm:curves}
There exists an absolute constant $c>0$ with the following property. The intersection graph $G$ of any collection of $n$ $x$-monotone curves contains a bi-clique of size at least $cn/\log n$, or its complement $\overline{G}$ contains a bi-clique of size at least $cn$.
\end{theorem}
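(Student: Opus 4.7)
The plan is to derive Theorem~\ref{thm:curves} from Theorem~\ref{thm:matching} by combining a geometric lemma on forbidden ordered matchings with an induction on $n$. First I would reduce to the case of \emph{grounded} $x$-monotone curves (each with one endpoint on a fixed vertical line $\ell$ and the rest in the closed half-plane to the right of $\ell$) via a standard vertical-line splitting argument losing only a constant factor. I would then order the $n$ grounded curves $c_1 < c_2 < \cdots < c_n$ by the $y$-coordinate of their grounded endpoint, and regard the intersection graph $G$ as an ordered graph with this ordering.

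The key structural claim is that $G$ does not contain, as an induced ordered subgraph, the ordered matching $M$ on four vertices $v_1 < v_2 < v_3 < v_4$ with edge set $\{v_1 v_3,\, v_2 v_4\}$. To see this, suppose $c_1 < c_2 < c_3 < c_4$ realize $M$, write $[0, A_i]$ for the $x$-extent of $c_i$, and view each $c_i$ as a continuous function of $x$. For each disjoint pair $(c_i, c_j)$ with $i < j$, continuity together with $c_i(0) < c_j(0)$ forces $c_i(x) < c_j(x)$ throughout $[0, \min(A_i, A_j)]$, since otherwise the intermediate value theorem would produce an intersection. Let $x_{13} \in [0, \min(A_1, A_3)]$ be a point where $c_1 = c_3$. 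If $x_{13} \leq A_2$, then $c_1(x_{13}) < c_2(x_{13}) < c_3(x_{13})$, contradicting $c_1 = c_3$, so $x_{13} > A_2$ and hence $A_2 < x_{13} \leq A_3$. Symmetrically, a point $x_{24}$ where $c_2 = c_4$ satisfies $A_3 < x_{24} \leq A_2$, which contradicts $A_2 < A_3$.

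To conclude, let $c_0 > 0$ be the constant supplied by Theorem~\ref{thm:matching} for the matching $M$. If $\Delta(G) < c_0 n$, then Theorem~\ref{thm:matching} yields a bi-clique of linear size in $\overline{G}$. Otherwise, some vertex $v$ satisfies $\deg_G(v) \geq c_0 n$, and the induced subgraph $G[N_G(v)]$ is itself the intersection graph of $|N_G(v)| \geq c_0 n$ grounded $x$-monotone curves; applying the theorem inductively produces either a bi-clique of size $\Omega(n/\log n)$ in $G[N_G(v)] \subseteq G$ or a bi-clique of size $\Omega(n)$ in $\overline{G}[N_G(v)] \subseteq \overline{G}$, and adjusting constants closes the induction. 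The main obstacle is the geometric lemma above, whose delicacy lies in the simultaneous comparison of the four $x$-extents $A_1, \ldots, A_4$ with the positions of $x_{13}$ and $x_{24}$. A secondary technical point is the initial reduction to grounded curves: writing $G = G^L \cup G^R$ as the edge-union of the intersection graphs of the left and right grounded halves of curves crossing $\ell$, a bi-clique in $\overline{G}$ must lie in both $\overline{G^L}$ and $\overline{G^R}$, which requires applying the grounded result in succession to the two halves and carefully extracting a common bi-clique.
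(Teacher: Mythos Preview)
Your geometric lemma (that the ordered intersection graph of grounded $x$-monotone curves avoids the matching $M=\{v_1v_3,v_2v_4\}$) is correct and matches part~(1) of Lemma~\ref{lemma:char} in the paper. The reduction from general to grounded curves is also along the right lines, and the paper handles it with Lemmas~\ref{lemma:hereditary} and~\ref{lemma:line}.

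The genuine gap is the inductive step. Suppose you are trying to establish constants $a,b>0$ so that every such $G$ on $n$ vertices has a bi-clique of size $an/\log n$, or $\overline{G}$ has one of size $bn$. In the high-degree case you pass to $G[N(v)]$ with $m:=|N(v)|\ge c_0 n$, and the inductive hypothesis yields a bi-clique of size $am/\log m\ge ac_0\,n/\log n$ in $G$, or one of size $bm\ge bc_0\,n$ in $\overline{G}$. Since $c_0<1$, neither bound reaches $an/\log n$ or $bn$, so ``adjusting constants'' cannot close the induction. Iterating instead of inducting, the successive pivots $v_1,v_2,\ldots$ do form a clique, but after $k$ steps the surviving set has size $\ge c_0^k n$; balancing gives only a clique of size $O(\log n)$ in $G$, not $\Omega(n/\log n)$. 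This is not a technicality: by the lower-bound construction cited after Theorem~\ref{thm:curves}, there are intersection graphs of $x$-monotone curves in which $\overline{G}$ has no bi-clique of size $n^{\varepsilon}$, so the $\Omega(n/\log n)$ bi-clique in $G$ must actually be \emph{produced}, and your scheme has no mechanism for that.

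The paper fills exactly this hole with a second structural fact you did not use: for grounded $x$-monotone curves, the \emph{complement} $\overline{G}_{<}$ contains no induced monotone $P_4$ (Lemma~\ref{lemma:char}\,(2)). A R\"odl-type lemma (Corollary~\ref{cor:universal}) then yields a linear-sized $U$ on which either $\Delta(G[U])$ or $\Delta(\overline{G}[U])$ is small. In the first case Theorem~\ref{thm:matching} gives the linear bi-clique in $\overline{G}$; in the second, Theorem~\ref{thm:path} applied to $\overline{G}[U]$ supplies the $\Omega(n/\log n)$ bi-clique in $G$. Your argument invokes only Theorem~\ref{thm:matching}, and so lacks the tool that handles the dense regime.
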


This result is tight, up to the value of $c$; see \cite{PT06}. Indeed, Fox \cite{F06} proved that for any $\varepsilon>0$ there exists a constant $c(\varepsilon)$ such that for every $n\in \mathbb{N}$, there exists an incomparability graph $G$ on $n$ vertices such that $G$ does not contain a bi-clique of size $c(\varepsilon)n/\log n$, and the complement of $G$ does not contain a bi-clique of size $n^{\epsilon}$. On the other hand, every incomparability graph is isomorphic to the intersection graph of a collection of $x$-monotone curves \cite{SiSiUr, Lo, PT06}.

It was shown in \cite{FPT10} that if the intersection graph of $n$ $x$-monotone curves has at most $2^{-18}\binom{n}{2}$ edges, then the second option holds in Theorem~\ref{thm:curves}: $\overline{G}$ contains a bi-clique of size at least $cn$. The proof of this statement uses a separator theorem for string graphs \cite{Lee, Ma}. The argument is rather involved and leaves no room for replacing $2^{-18}$ by a decent constant. Tomon \cite{T16} applied some properties of partially ordered sets to establish the upper bound $\left(\frac{1}{16}-o(1)\right)\binom{n}{2}$. Somewhat surprisingly, using ordered graphs, one can precisely determine the best constant for which the statement still holds.

\begin{theorem}\label{thm:threshold}
	For any $\epsilon>0$, there are constants $c_{1}=c_1(\epsilon),c_{2}=c_2(\epsilon)>0$, and an integer $n_0=n_0(\epsilon)$ such that the following statements are true. For every $n\ge n_0$,
	
	(1) there exist $n$ $x$-monotone curves such that their intersection graph $G$ has at most $(\frac{1}{4}+\epsilon)\binom{n}{2}$ edges, but the complement of $G$ does not contain a bi-clique of size $c_{1}\log n$;
	
	(2) for any $n$ $x$-monotone curves such that their intersection graph $G$ has at most $(\frac{1}{4}-\epsilon)\binom{n}{2}$ edges, the complement of $G$ contains a bi-clique of size $c_{2}n$.
\end{theorem}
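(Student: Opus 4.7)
The plan is to prove the two parts of Theorem~\ref{thm:threshold} separately. Throughout, I use the fact that the class of intersection graphs of $x$-monotone curves coincides with the class of incomparability graphs of posets; part~(2) reduces to Theorem~\ref{thm:matching} via a forbidden ordered matching, while part~(1) requires a probabilistic construction of a poset whose comparability graph is dense but has no bi-clique of size $c_1\log n$.

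\medskip
For part~(2), I will write $G$ as the incomparability graph of a poset $P$ on $n$ elements, and order the vertices of $G$ by any linear extension $\pi$ of $P$ to produce an ordered graph $G_{<}$. The structural key is that $G_{<}$ contains no induced copy of the nested ordered matching $M$ on four vertices $u_1<u_2<u_3<u_4$ with edges $\{u_1u_4,\ u_2u_3\}$. Indeed, the non-edges $u_1u_2$ and $u_2u_4$ of $M$ correspond to comparabilities in $P$ which, since $\pi$ extends $P$, force $u_1<_P u_2<_P u_4$; transitivity then yields $u_1<_P u_4$, contradicting the edge $u_1u_4$ (encoding incomparability). Theorem~\ref{thm:matching} therefore produces a bi-clique of size $c(M)n$ in $\overline{G}$ as soon as $\Delta(G_{<})<c(M)n$. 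To remove the max-degree hypothesis, I split into two cases: if some vertex $v$ of $P$ satisfies $|\{w:w<_P v\}|\ge c_2 n$ and $|\{w:w>_P v\}|\ge c_2 n$, then transitivity already exhibits these two sets as a bi-clique of size $c_2 n$ in $\overline{G}$; otherwise every vertex is ``near the top'' or ``near the bottom'' of $P$, and the edge bound $|E(G)|\le(\tfrac14-\eps)\binom{n}{2}$ allows me to extract a sub-poset on $\Omega(n)$ vertices whose incomparability graph has maximum degree less than $c(M)$ times its order, to which the forbidden-matching argument above applies. The main technical obstacle is to make this extraction step quantitative, so that the $\eps$-slack in the edge count produces a linearly-sized surviving subset.

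\medskip
For part~(1), the task is to exhibit, for each $\eps>0$, a poset $P$ on $n$ elements with at most $(\tfrac14+\eps)\binom{n}{2}$ incomparable pairs whose comparability graph contains no bi-clique of size $c_1\log n$; the $x$-monotone-curve realisation is then automatic. My candidate construction partitions $V(P)$ into four antichain levels $A_1,A_2,A_3,A_4$ of size $n/4$, and chooses the comparabilities between each pair $(A_i,A_j)$ with $i<j$ as a pseudorandom bipartite graph of density $p$ close to $1$, after closing the relation under transitivity. Tuning $p$ keeps the expected number of incomparable pairs below $(\tfrac14+\eps)\binom{n}{2}$, while a first-moment calculation bounds the expected number of $c_1\log n$-sized bi-cliques in the comparability graph; with positive probability, both bounds are achieved. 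The main difficulty is ensuring that the transitive closure of the four bipartite layers does not destroy the pseudorandomness needed to defeat logarithmic bi-cliques; a viable alternative is to adapt Fox's construction~\cite{F06} of dense incomparability graphs with small bi-cliques and recalibrate it to achieve density exactly $\tfrac14+\eps$.
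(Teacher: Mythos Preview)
Your proposal rests on the assertion that the class of intersection graphs of $x$-monotone curves \emph{coincides} with the class of incomparability graphs. Only one inclusion holds: every incomparability graph is realised by $x$-monotone curves (indeed by graphs of continuous functions on a common interval), but the converse is false. For example, the $5$-cycle $C_5$ is the intersection graph of five straight-line segments in general position (hence of $x$-monotone curves), yet $C_5$ is not a co-comparability graph. This single error breaks the plan for part~(2): given an arbitrary family of $x$-monotone curves you cannot, in general, write $G$ as the incomparability graph of a poset, so there is no linear extension available, the forbidden nested matching is not guaranteed, and the appeal to Theorem~\ref{thm:matching} collapses. The paper handles part~(2) by an entirely different route: it first reduces to curves that all meet a common vertical line, characterises the complements of such intersection graphs as \emph{double-magical} graphs (a class strictly larger than comparability graphs, carrying three linear orders rather than one partial order), and then shows via an Erd\H{o}s--Simonovits supersaturation argument on copies of $K_5$ together with a combinatorial notion of ``forcing $4$-tuples'' that any double-magical graph with at least $(\tfrac34+\epsilon)\binom{n}{2}$ edges contains a linear-sized bi-clique.

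For part~(1) you use only the valid inclusion, so the strategy is not doomed, but the four-level random poset you propose does not survive transitive closure: with constant density $p$ between consecutive levels, for almost every pair $(a,c)\in A_1\times A_3$ there is some $b\in A_2$ with $a<_P b<_P c$, so after closing transitively the comparability graph contains a bi-clique of size close to $n/4$, not $O(\log n)$. You flag this obstacle but do not overcome it, and the suggested fallback to Fox's construction is left unspecified. The paper avoids posets altogether here: it invokes a lemma of Pach and T\'oth that \emph{any} graph whose vertex set is partitioned into four cliques is the intersection graph of a family of convex sets, then takes four cliques of size $n/4$ joined by sparse random edges of density $\epsilon$. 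No transitivity constraint arises, and the absence of a bi-clique of size $c_1\log n$ in $\overline{G}$ follows from the standard first-moment bound on balanced bicliques in a random bipartite graph.
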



It is easy to see that every intersection graph of {\em convex sets} in the plane is also an intersection graph of $x$-monotone curves. We prove (1) by constructing $n$ convex sets in the plane whose intersection graphs meets the requirements. Therefore, $\frac{1}{4}\binom{n}{2}$ is also a threshold for the emergence of linear sized bi-cliques in the complements of intersection graphs of convex sets.

In~\cite{FP12}, Theorem~\ref{thm:curves} was established in a more general setting: without assuming that the curves are $x$-monotone. It is a serious challenge to extend our proof to that case. We still believe that Theorem~\ref{thm:threshold} should also generalize to arbitrary curves.

\begin{conjecture}
	For any $\epsilon>0$, there exist $c_0=c_0(\epsilon)>0$ and $n_0=n_0(\epsilon)$ with the property that for any collection of $n\ge n_0$ curves whose intersection graph has at most $(\frac{1}{4}-\epsilon)\binom{n}{2}$ edges, the complement of $G$ contains a bi-clique of size $c_{0}n$.
\end{conjecture}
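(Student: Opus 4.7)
The natural plan is to reduce the conjecture to the $x$-monotone case already handled in Theorem \ref{thm:threshold}(2). Given a family $\mathcal{C}$ of $n$ curves whose intersection graph $G$ has at most $(\frac14 - \epsilon)\binom{n}{2}$ edges, I would try to decompose each curve into $x$-monotone arcs with respect to some coordinate system, apply Theorem \ref{thm:threshold}(2) to the resulting arc family, and lift a linear-sized bi-clique among the arcs back to one among the original curves.

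Concretely, fix a generic direction and let $t(C)$ denote the number of maximal $x$-monotone sub-arcs of $C$ in this direction. Discard the $o(n)$ curves with the largest values of $t(C)$ and assume each of the remaining curves has $t(C) \le T$ for some constant $T = T(\epsilon)$; call this sub-family $\mathcal{C}'$. The arc family $\mathcal{A}$ has $N = O(n)$ elements, and a short computation shows that its intersection graph $H$ has density at most $|E(G)|/\binom{n}{2} + o(1) < \frac14 - \epsilon/2$ (each original edge contributes at most $T^2$ arc pairs, and the denominator $\binom{N}{2}$ grows as $T^2 \binom{n}{2}$, so the factors of $T^2$ cancel). Applying Theorem \ref{thm:threshold}(2) to $\mathcal{A}$ yields a bi-clique $(\mathcal{A}_1, \mathcal{A}_2)$ in $\overline{H}$ of size $\Omega(N) = \Omega(n)$. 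A pigeonhole argument over the $2^T$ possible subsets of arcs per curve then picks out linear-sized disjoint sub-families $\mathcal{C}_1, \mathcal{C}_2 \subs \mathcal{C}'$ such that every arc of every curve in $\mathcal{C}_i$ lies in $\mathcal{A}_i$. These sub-families form a bi-clique of linear size in $\overline{G}$.

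The main obstacle, and the reason this remains only a conjecture, is the very first step: a pathological continuous curve can have infinitely many vertical tangents in every direction, and even discounting this extreme behaviour, bounding $t(C)$ uniformly by a constant depending only on $\epsilon$ would require a quantitative hypothesis that is absent from the statement. To circumvent this, one would need a substitute that works in the purely topological setting. Two natural routes are (a) an edge-density-preserving version of the Pach--T\'oth theorem from \cite{PT06} that every string graph is the intersection graph of $x$-monotone curves, and (b) a direct topological generalisation of Theorems \ref{thm:path} and \ref{thm:matching} that dispenses with the total ordering of the vertex set. Route (a) seems more promising: the construction in \cite{PT06} already implicitly realises each curve as $O(1)$ $x$-monotone arcs within a suitable representation, and I expect the bulk of the technical work to lie in refining that construction so as to preserve the $\frac14$ edge-density threshold up to an $o(1)$ additive error.
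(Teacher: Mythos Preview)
The statement you are addressing is a \emph{conjecture} in the paper; the authors explicitly present it as open and give no proof. There is therefore nothing in the paper to compare your sketch against, and you yourself concede that the argument is incomplete.

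Beyond the obstruction you already flag (no uniform bound on the number of $x$-monotone sub-arcs), two further points deserve mention. First, even granting such a bound $T$, the lifting step is wrong as written. A bi-clique $(\mathcal{A}_1,\mathcal{A}_2)$ in $\overline{H}$ only says that each arc in $\mathcal{A}_1$ misses each arc in $\mathcal{A}_2$; for two original curves $C,C'$ to be disjoint you need \emph{every} arc of $C$ to lie in $\mathcal{A}_1$ and every arc of $C'$ in $\mathcal{A}_2$. Nothing forces a linear number of curves to have all of their arcs on one side---$\mathcal{A}_1$ may perfectly well consist of a single arc from each of $\Omega(n)$ distinct curves---so the ``pigeonhole over $2^T$ subsets'' does not recover a bi-clique in $\overline{G}$.

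Second, route~(a) rests on a misattribution. There is no Pach--T\'oth theorem asserting that every string graph is the intersection graph of $x$-monotone curves; the result cited as \cite{PT06} (together with \cite{SiSiUr,Lo}) concerns only \emph{incomparability graphs}. Intersection graphs of $x$-monotone curves form a proper subclass of string graphs, so the ``edge-density-preserving transfer'' you envisage cannot exist in general. Any genuine attack on the conjecture has to engage with the topological setting directly, which is precisely why the authors leave it open.
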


For {\em unordered} graphs without (unordered) induced paths of length $k$, the size of the largest bi-clique that can be found in $\overline{G}$ is larger than what was shown in Theorem~\ref{thm:path}: it is linear in $n$. More precisely, Bousquet, Lagoutte, and Thomass\'e~\cite{BLT15} proved that for every positive integer $k$, there exists $c(k)>0$ such that, if $G$ is an unordered graph with $n$ vertices and at most $c(k){n \choose 2}$ edges, which does not have an induced path of length $k$, then its complement $\overline{G}$ contains a bi-clique of size at least $c(k)n$. Recently, Chudnovsky, Scott, Seymour, and Spirkl~\cite{CSSS} generalized this result to any forbidden forest, instead of a path. In an upcoming work \cite{PT+}, we obtain similar extensions of Theorems \ref{thm:path} and \ref{thm:matching} to other {\em ordered forests}.

Our paper is organized as follows. Theorems \ref{thm:path} and~\ref{thm:matching} are proved in Sections \ref{sect:path} and \ref{sect:matching}, respectively. In Section~\ref{sect:curves}, we first establish Theorem \ref{thm:curves} for {\em grounded} $x$-monotone curves, and then show that this already implies the general result. Finally, we prove Theorem \ref{thm:threshold} in Section \ref{sect:threshold}.

\section{Ordered graphs avoiding a monotone induced path\\--Proof of Theorem \ref{thm:path}}\label{sect:path}

For any subset $U$ of the vertex set of a graph $G$, define the {\em neighborhood} of $U$, as
$$N(U)=\{v\in V(G)\setminus U:\exists u\in U\mbox{ such that }uv\in E(G)\}.$$
If $U$ consists of a single point $u$, we write $N(u)$ instead of $N(\{u\})$. The subgraph of $G$ {\em induced} by the vertices in $U$ is denoted by $G[U]$.

Given an ordered graph $G=G_{<}$ and two subsets $S,T\subset V(G)$, we write $S<T$ if $s<t$ for every $s\in S$ and $t\in T$. We say that a vertex $t\in T$ can be {\em reached} from a vertex $v\in V(G)$ {\em by a monotone $T$-path,} if there is an increasing sequence of vertices $v<t_1<t_2<\ldots<t_r=t$ such that $t_{1},\dots,t_{r}\in T$ and $vt_1, t_1t_2,\ldots, t_{r-1}t_r\in E(G)$. (The vertex $v$ does not necessarily belong to $T$.) Let $P_{G}(S,T)$ denote the set of vertices in $T$ that can be reached from {\em some} vertex in $S$ by a monotone $T$-path in $G$. If it is clear from the context what the underlying ordered graph $G$ is, we write $P(S,T)$ instead of $P_{G}(S,T)$. If $S$ consists of a single vertex $s$, we write $P(s,T)$ instead of $P(\{s\},T)$. Finally, if $T=V(G)$, then we write $P(S)$ instead of $P(S,V(G))$.

For the proof of Theorem \ref{thm:path}, we need the following lemma.

\begin{lemma}\label{lemma:path}
	Let $G=G_{<}$ be an ordered graph on the vertex set $S\cup T$, where $S<T$,  $|S|\geq \frac{n}{6\log_{2}n}$, and $|T|\geq n$. Then either there exists a vertex $v\in S$ such that $|P(v,T)|\geq \frac{n}{12}$ or the complement of  $G$ contains a bi-clique of size $\frac{n}{12\log_{2} n}$.
\end{lemma}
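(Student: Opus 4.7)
Assume for contradiction that $|P(v,T)| < n/12$ for every $v \in S$; I would construct a bi-clique of size $m := \lceil n/(12\log_2 n)\rceil$ in $\overline G$. The first reduction peels off the ``deep'' vertices of $T$. Writing $R(t) := \{t\} \cup P_G(t, T\setminus\{t\})$ for the set of vertices forward-reachable from $t$ by a monotone $T$-path, I note that if $v \in S$ is adjacent to some $t$ with $|R(t)| \geq n/12$, then $R(t) \subseteq P(v,T)$, contradicting the standing assumption. Consequently $T^+ := \{t \in T : |R(t)| \geq n/12\}$ has no edges to $S$ in $G$, and if $|T^+| \geq m$ the proof is finished (using $|S| \geq 2m$). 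Otherwise $T' := T \setminus T^+$ has size at least $n - m$ and every $t \in T'$ satisfies $|R(t)| < n/12$.

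Next, I would apply a width dichotomy on the partial order $\leq_P$ on $T'$, defined by $t \leq_P t'$ iff there is a monotone $T$-path from $t$ to $t'$. Because any edge of $G[T']$ is itself such a path, antichains of $\leq_P$ coincide with independent sets of $G[T']$. If $(T',\leq_P)$ contains an antichain of size at least $2m$, I would split it into two halves of size $m$ to obtain the required bi-clique in $\overline G$. Otherwise every antichain has size less than $2m$, and by Dilworth's theorem $T'$ is partitioned into $r < 2m = n/(6\log_2 n)$ chains $C_1,\ldots,C_r$, each of length less than $n/12$. For every $v \in S$ and every $j$, the intersection $P(v,T) \cap C_j$ is upward-closed in $C_j$'s total order, hence a suffix of some length $p_v^j$; summing gives $\sum_j p_v^j = |P(v,T) \cap T'| < n/12$. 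For any $A \subseteq S$, the set $B := T' \setminus \bigcup_{v\in A} P(v,T)$ is a union of chain prefixes, has no edges to $A$ in $G$, and satisfies $|B| = |T'| - \sum_j \max_{v\in A} p_v^j$.

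It remains to choose $A \subseteq S$ of size $m$ so that $\sum_j \max_{v\in A} p_v^j \leq |T'| - m$, and this is where I expect the main obstacle. The trivial bound $\sum_j \max_v p_v^j \leq \sum_{v\in A,j} p_v^j \leq mn/12$ is too weak by roughly a factor of $\log_2 n$, so the saving must come from exploiting that on each chain the ``max'' is far below the ``sum'' whenever several members of $A$ share the same suffix pattern on that chain. I would classify the vertices of $S$ by a dyadic signature recording $\lfloor \log_2 p_v^j\rfloor$ for each $j$; since each signature has support on at most $n/12$ chains with entries of magnitude at most $\log_2 n$, the number of distinct signatures is polynomially bounded, and a pigeonhole exploiting the slack $|S|/m = 2$ should extract $m$ vertices on which the suffix lengths are essentially aligned across all chains, giving $\max_{v\in A} p_v^j$ comparable to the common value rather than the cumulative sum. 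Making this final selection quantitative enough to beat the union bound by the required $\log_2 n$ factor is the technical heart of the argument.
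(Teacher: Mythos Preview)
Your proposal sets up an interesting chain decomposition via Dilworth, but the final step is not a mere technicality: it is a genuine gap that the dyadic-signature idea cannot close. After Dilworth you have $r<2m\approx n/(6\log_2 n)$ chains, and a signature is a vector with $r$ coordinates each taking one of about $\log_2 n$ values. Hence the number of signatures is roughly $(\log_2 n)^{r}=2^{\Theta(n\log\log n/\log n)}$, while you only have $|S|\ge 2m$ vertices to pigeonhole and need a class of size $m$; the slack $|S|/m=2$ is hopelessly small against a super-polynomial signature space. More conceptually, to bound $\sum_j \max_{v\in A}p_v^j$ by something like $n/6$ you would need, for a \emph{typical} chain, the suffix lengths $\{p_v^j:v\in A\}$ to be within a constant factor of one another. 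Nothing in the hypotheses forces this: one can have, for each $v\in S$, the mass $\sum_j p_v^j<n/12$ concentrated on a different small set of chains, so that for every $m$-subset $A$ the maxima add up to essentially $\sum_{v\in A}\sum_j p_v^j$, which is of order $mn$. The reductions you perform (stripping $T^+$, passing to chains) discard the linear order on $T$, and with it the structure that makes the $\log n$ saving possible.

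The paper's argument keeps that linear structure front and centre. It partitions $T$ into about $2\log_2 n$ consecutive intervals $A_1,\dots,A_s$ of size $3m$ and runs a \emph{halving search} on $S$: starting from $S_0\subset S$ of size $m=2^k$, at step $i$ it uses the no-bi-clique assumption to push $P(S_i,T)$ from $A_i$ into $A_{i+1}$ with $|P(S_i,T)\cap A_{i+1}|\ge 2m$, then splits $S_i$ into two halves and keeps the half whose reach in $A_{i+1}$ is still $\ge m$. After $k=\log_2 m$ steps one is left with a single vertex $v$ whose reach meets $A_k$ in $\ge m$ points, and the same bi-clique argument propagates this through the remaining $s-k\ge \log_2 n$ intervals, giving $|P(v,T)|\ge m(s-k)>n/12$. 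The key is that each halving costs one interval, and there are $\Theta(\log n)$ intervals to spend—exactly matching the $\log n$ halvings needed to isolate a single vertex. Your chain decomposition replaces this ordered, sequential resource by an unordered one and loses the bookkeeping that makes the counts balance.
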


\begin{proof}
With no danger of confusion, we omit the use of floors and ceilings, whenever they are not crucial.
	Let $m=2^{k}$ such that $\frac{n}{12\log_{2} n}<m\leq \frac{n}{6\log_{2} n}$, and suppose that $\overline{G}$, the complement of $G$, contains no bi-clique of size $m$. Divide $T$ into $s=\frac{n}{3m}\geq2\log_{2} n$ intervals of size $3m$, denoted by $A_{1},\dots,A_{s}$, in this order.
	
	 We will recursively define a sequence of sets $S\supset S_{0}\supset S_{1}\supset\dots \supset S_{k}$ such that $|S_{i}|=2^{k-i}$ for $i=0,\dots,k$, and $|P(S_{i},T)\cap A_{i}|\geq m$ for $i=1,\dots,k$.

	 Let $S_{0}$ be an arbitrary $m$ element subset of $S$. Suppose that the set $S_{i}$ satisfying the above conditions has already been determined for some $i<k$. We define $S_{i+1}$, as follows. Let $X=P(S_{i},T)\cap A_{i}$. We can assume that $|N(X)\cap A_{i+1}|\geq 2m$, otherwise $|A_{i+1}\setminus N(X)|\geq m$ and there is a bi-clique $(A,B)$ in $\overline{G}$ of size $m$ such that $A\subset X$ and $B\subset A_{i+1}\setminus N(X)$. As $A_{i+1}$ is $<$-larger than every element of $A_{i}$, we have $|N(X)\cap A_{i+1}|\subset P(S_{i},T)\cap A_{i+1}$ and, hence, $|P(S_{i},T)\cap A_{i+1}|\geq 2m$.
	
	 Partition $S_{i}$ arbitrarily into two sets of size $|S_{i}|/2$, denoted by $S'$ and $S''$. Clearly, we have $P(S',T)\cup P(S'',T)=P(S_{i},T)$, so either $|P(S',T)\cap A_{i+1}|\geq m$, or $|P(S'',T)\cap A_{i+1}|\geq m$. In the first case, set $S_{i+1}=S'$, in the second, set $S_{i+1}=S''$.
	
	 At the end of the process, $S_{k}$ consists of one vertex, say $v$, and $|P(v,T)\cap A_{k}|\geq m$. We can actually assume that $|P(v,T)\cap A_{j}|\geq m$ for $j=k,\dots,s$. Indeed, there is no edge between $P(v,T)\cap A_{k}$ and $A_{j}\setminus P(v,T)$, so if $|A_{j}\cap P(v,T)|\leq m$, then $|A_{j}\setminus P(v,T)|\geq 2m$, which means that there exists a bi-clique $(A,B)$ of size $m$ in $\overline{G}$ such that $A\subset P(v,T)\cap A_{k}$ and $B\subset A_{j}\setminus P(v,T)$.
	
	 Summing up, we obtain that $$|P(v,T)|\geq \sum_{j=k}^{s}|P(v,T)\cap A_{j}|\geq m(s-k)>\frac{n}{12}.$$
\end{proof}

\begin{proof}[Proof of Theorem \ref{thm:path}]
Let $c=1/(24k^{2})$. Let $G=G_{<}$ be an ordered graph on $n$ vertices of maximum degree at most $cn$, and suppose that $\overline{G}$ does not contain a bi-clique of size $m=cn/\log_{2}n$. We have to prove that $G_{<}$ contains $P_{k}$ as an induced subgraph.

Let the vertex set of $G$ be $\{1,\dots,n\}$, and for $i=1,\dots,k$, let $A_{i}=\{(i-1)n/k+1,\dots,in/k\}$. We recursively construct a sequence of vertices $x_{1}<\dots<x_{k}$  that satisfy conditions (1) and (2) below, for $l=1,\dots,k$. Let
  $$U_{l+1}=V(G)\setminus\left(\bigcup_{i=1}^{l-1} N(x_{i})\right).$$
   Then

\noindent  (1)\;\;\;\; $\{x_{1},\dots,x_{l}\}$ is an induced copy of $P_{l}$,

\noindent  (2)\;\;\;\; $|P(x_{l},U_{l+1})\cap A_{l+1}|\geq m.$

For $l=1$, apply Lemma \ref{lemma:path} to the subgraph of $G$ induced by $A_{1}\cup A_{2}$ with $S=A_{1}$, $T=A_{2}$, and $n/k$ instead of $n$. Then there exists $x_{1}\in A_{1}$ such that $|P_{G}(x_{1})\cap A_{2}|\geq \frac{n}{12k}>m$.

Now let $l>1$ and suppose that the vertices $x_{1}<\dots<x_{l-1}$ satisfying conditions (1) and (2) have already been defined. Let $S=P(x_{l-1},U_{l})\cap A_{l}$ and $T=U_{l+1}\cap A_{l+1}$. (Note that for the definition of $U_{l+1}$ we do not need $x_{l}$.) Then $|S|\geq m$ and, as the maximum degree of $G$ is at most $cn$, we have $|T|\geq |A_{l+1}|-(l-1)cn>\frac{n}{2k}$. Apply Lemma \ref{lemma:path} to the subgraph of $G$ induced by $S\cup T$ with $n/2k$ instead of $n$. Since $\overline{G}$ does not contain a bi-clique of size $$\frac{cn}{\log_{2}n}<\frac{n/(2k)}{12\log_{2}(n/(2k))},$$ there exists $w\in S$ such that $|P(w,T)|>\frac{n}{24k}$. We have $w\in P(x_{l-1},U_{l})$, therefore $w$ can be reached from $x_{l-1}$ by a monotone $U_{l}$-path. Let $x_{l-1}=u_{0}<\dots<u_{r}=w$ be such a path with the minimum number of vertices. By the definition of $U_{l}$,  the vertices $u_{1},\dots,u_{r}\in U_{l}$ do not belong to the neighborhoods of $x_{1},\dots,x_{l-2}$, and, by the minimality of the path, $u_{2},\dots,u_{r}$ are not in the neighborhood of $x_{l-1}$. Setting $x_{l}=u_{1}$, we find that $\{x_{1},\dots,x_{l}\}$ is an induced copy of $P_{l}$. Thus, condition (1) is satisfied.

Vertex $w$ can be reached from $x_{l}$ by a monotone $U_{l+1}$-path, and every $z\in P(w,T)$ can be reached from $w$ by a monotone $U_{l+1}$-path. Therefore, every $z\in P(w,T)$ can be reached from $x_{l}$ by a monotone $U_{l+1}$-path. This yields that $$|P(x_{l},U_{l+1})\cap A_{l+1}|\geq |P(w,T)|>\frac{n}{24k}>m,$$
so that condition (2) is satisfied.

For $l=k$, the ordered subgraph of $G$ induced by $\{x_{1},\dots,x_{k}\}$ is isomorphic to $P_{k}$. This completes the proof of the theorem.
\end{proof}

\section{Ordered graphs avoiding an induced matching\\--Proof of Theorem \ref{thm:matching}}\label{sect:matching}


\begin{proof}[Proof of Theorem \ref{thm:matching}]
	Let $k$ be the number of edges of $M$, and set $c=1/(8k^{3})$. Let $G=G_{<}$ be an ordered graph on $n$ vertices such that the maximum degree of $G$ is at most $cn$, and suppose that $\overline{G}$ does not contain a bi-clique of size $cn$. We have to prove that $G$ contains $M$ as an induced subgraph.
	
	Suppose that $\{1,\dots,2k\}$ is the vertex set of $M$, and let $\{a_{1},b_{1}\},\dots\{a_{k},b_{k}\}$ be the edges of $M$. Let the vertex set of $G$ be $\{1,\dots,n\}$, and let $A_{1},\dots,A_{2k}$ be a partition of $V(G)$ into $2k$ intervals of size $\frac{n}{2k}$. Observe that, for $i=1,\dots,k$, there exists a set $E_i$ of $\frac{n}{4k}$ disjoint edges between $A_{a_{i}}$ and $A_{b_{i}}$. Otherwise, we could find a bi-clique $(A,B)$ of size $\frac{n}{4k}$ in $\overline{G}$ such that $A\subset A_{a_{i}}$ and $B\subset A_{b_{i}}$, contradicting our assumptions. Let $B_{a_{i}}\subset A_{a_{i}}$ and $B_{b_{i}}\subset A_{b_{i}}$ be the set of vertices incident to the edges of $E_{i}$.
	
	Pick an edge $e_{i}=\{u_{a_{i}},u_{b_{i}}\}$ randomly and uniformly from $E_{i}$ for $i=1,\dots,k$, and let $U=\{u_{1},\dots,u_{2k}\}$. To complete the proof, it is sufficient to show that with positive probability the subgraph of $G$ induced by $U$ is isomorphic to $M$.
	
	 Clearly, the subgraph induced by $U$ is not isomorphic to $M$ if and only if $\{u_{i},u_{j}\}$ is an edge of $G$ for some $\{i,j\}\not\in E(M)$. Let $1\leq i<j\leq 2k$ such that $\{i,j\}$ is not an edge of $M$. As the maximum degree of the vertices of $G$ is at most $cn$, there are at most $cn|B_{i}|$ edges between $B_{i}$ and $B_{j}$. As $u_{i}$ and $u_{j}$ are uniformly distributed in $B_{i}$ and $B_{j},$ and $u_{i}$ is independent of $u_{j}$, the probability that $\{u_{i},u_{j}\}$ is an edge of $G$ is at most $\frac{cn|B_{i}|}{|B_{i}||B_{j}|}<\frac{1}{2k^{2}}.$ Therefore, we have
	$$\mathbb{P}(\{u_{i},u_{j}\}\in E(G)\mbox{ for some }\{i,j\}\not\in E(M))\leq \sum_{\{i,j\}\not\in E(M)}\mathbb{P}(\{u_{i},u_{j}\}\in E(G))<\frac{\binom{2k}{2}}{2k^{2}}<1.$$
	
	Hence, with positive probability the subgraph of $G$ induced by $U$ is isomorphic to $M$, which implies that $G$ contains $M$ as an induced subgraph.		
\end{proof}

\section{Intersection graphs of curves--Proof of Theorem~\ref{thm:curves}}\label{sect:curves}

First, we prove Theorem~\ref{thm:curves} in the special case where the curves are grounded, that is, their left endpoints lie on the $y$-axis.

\begin{lemma}\label{lemma:grounded}
There exists an absolute constant $c>0$ with the following property. The intersection graph $G$ of any collection $\mathcal C$ of $n$ grounded $x$-monotone curves contains a bi-clique of size at least $cn/\log n$, or its complement $\overline{G}$ contains a bi-clique of size at least $cn$.
\end{lemma}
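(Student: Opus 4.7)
The plan is to order the grounded curves by the $y$-coordinate of their left endpoints on the $y$-axis, obtain an ordered intersection graph $G_<$, identify two forbidden induced ordered subgraphs arising from the geometry, and apply Theorems~\ref{thm:matching} and~\ref{thm:path}. Writing $a_j$ for the $x$-coordinate of the right endpoint of $c_j$, the two observations to establish first are: (i) $G_<$ contains no induced ordered alternating matching $M$ on $1<2<3<4$ with edges $\{1,3\}$ and $\{2,4\}$; and (ii) $\overline{G_<}$ contains no induced ordered monotone path $P_4$ on $v_1<v_2<v_3<v_4$.

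The proof of (i) would be a sweep argument: if four curves $c_1<c_2<c_3<c_4$ realized $M$, the four prescribed non-edges $\{1,2\},\{2,3\},\{3,4\},\{1,4\}$ together with $x$-monotonicity and the starting order in $y$ would force $c_1(x)<c_2(x)<c_3(x)<c_4(x)$ on every common portion of the $x$-domain. Hence $c_1\cap c_3\ne\emptyset$ would demand a crossing at some $x>a_2$ (otherwise $c_2$ would sit strictly between $c_1$ and $c_3$ at that point), forcing $a_2<a_3$; symmetrically $c_2\cap c_4\ne\emptyset$ would force $a_3<a_2$, a contradiction. The proof of (ii) is entirely analogous — the three consecutive non-edges along $v_1<v_2<v_3<v_4$ in $\overline{G_<}$ and the required crossings between $v_1v_3$ and $v_2v_4$ in $G$ lead to the same contradictory pair of inequalities on the right-endpoint coordinates.

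With these two forbidden patterns in hand, I would set $c_M=c(M)$ from Theorem~\ref{thm:matching} and $c_P=c(4)$ from Theorem~\ref{thm:path}, and split into cases. If $\Delta(G_<)<c_Mn$, Theorem~\ref{thm:matching} applied to $G_<$ with forbidden ordered matching $M$ yields a bi-clique of size $c_Mn$ in $\overline{G}$, as desired. If instead $\Delta(\overline{G_<})<c_Pn$, Theorem~\ref{thm:path} applied to $\overline{G_<}$ with $k=4$ yields a bi-clique of size $c_Pn/\log n$ in the complement of $\overline{G_<}$ — namely in $G$. Either outcome is the conclusion of the lemma.

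The hard part will be closing the remaining regime, where neither max-degree hypothesis is satisfied. My plan there is to pass to a linear-sized induced vertex subset on which one of the hypotheses is recovered: choose a small threshold $\lambda=\min(c_M,c_P)/10$, partition $V(G)=V_L\cup V_H\cup V_M$ with $V_L=\{v:\deg_G v<\lambda n\}$ and $V_H=\{v:\deg_G v>(1-\lambda)n\}$, and use a pigeonhole argument to isolate a linear-sized piece on which the correct max-degree condition holds, reducing to one of the two cases above. The main obstacle will be arranging the pigeonhole and the restriction so that a single absolute constant $c$ survives without the bi-clique size being eroded; this may require either a finer iteration or a supplementary structural observation about ordered graphs simultaneously avoiding $M$ as an induced subgraph of $G_<$ and the monotone $P_4$ as an induced subgraph of $\overline{G_<}$.
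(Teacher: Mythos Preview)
Your identification of the two forbidden ordered patterns and the application of Theorems~\ref{thm:path} and~\ref{thm:matching} in the two bounded-degree regimes are exactly the paper's approach (your observations (i) and (ii) are Lemma~\ref{lemma:char}). The gap is the ``hard'' middle regime, and your proposed degree-partition $V_L\cup V_M\cup V_H$ does not close it: nothing prevents almost every vertex from having degree near $n/2$, so $|V_M|$ may be essentially $n$ while $G[V_M]$ still satisfies neither max-degree hypothesis. An iteration of this step gives no control either, since degrees in the induced subgraph need not drift toward the extremes; this is precisely the phenomenon that a naive pigeonhole cannot handle.

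What the paper uses here is R\"odl's theorem (Lemma~\ref{lemma:universal}): any graph with a fixed forbidden induced subgraph contains a linear-sized vertex set that is $\epsilon$-sparse or $\epsilon$-dense. Combined with the R\"odl--Winkler embedding lemma (Lemma~\ref{rodlwinkler}) to pass from the ordered forbidden $M_1$ to an unordered forbidden $H'$, one obtains Corollary~\ref{cor:universal}: a linear-sized $U\subset V(G)$ with $\Delta(G[U])\le\delta|U|$ or $\Delta(\overline{G}[U])\le\delta|U|$. This is the missing structural ingredient that replaces your pigeonhole; once you have $U$, your two ``easy'' cases finish the argument verbatim. So the only substantive addition you need is to invoke R\"odl's theorem in place of the ad-hoc degree partition.
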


To prove this lemma, we first show that the intersection graphs of any collection of grounded $x$-monotone curves can be ordered in such a way that it has no ordered matching consisting of two intertwined edges, or its complement has no monotone path $P_{4}$.

Let $M_{1}$ denote the ordered matching on vertex set $\{1,2,3,4\}$, with edges $\{1,3\}$ and $\{2,4\}$.

\begin{lemma}\label{lemma:char}
	Let $\mathcal{C}$ be a family of grounded curves (not necessarily $x$-monotone), let $G$ be the intersection graph of  $\mathcal{C}$, and let $<$ be the total ordering of $\mathcal{C}$ according to the $y$-coordinates of the endpoints of the elements of $\mathcal{C}$ lying on $\{x=0\}$.
	
	(1) Then $G_{<}$ does not contain $M_{1}$ as an induced subgraph.
	
	(2) If, in addition, the elements of $\mathcal{C}$ are $x$-monotone curves, then $\overline{G}_{<}$ does not contain $P_{4}$ as an induced subgraph.
\end{lemma}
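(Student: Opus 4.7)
The plan is to prove (1) and (2) by rather different arguments: part (1) is a purely topological statement about grounded curves in the plane and yields to a Jordan-curve construction, while part (2) exploits the analytic fact that $x$-monotone grounded curves are graphs of functions defined on intervals starting at $0$.

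For (1), I would argue by contradiction. Suppose four grounded curves $C_1 < C_2 < C_3 < C_4$ (ordered by the $y$-coordinates $y_1 < y_2 < y_3 < y_4$ of their grounding points) induce $M_1$ in $G_<$, meaning $C_1 \cap C_3 \neq \emptyset$, $C_2 \cap C_4 \neq \emptyset$, and the remaining four pairs are disjoint. Since $C_1$ and $C_3$ share a point, the union $C_1 \cup C_3$ is path-connected and contains both $(0,y_1)$ and $(0,y_3)$; a standard plane-topology lemma then produces a Jordan arc $\Gamma \subseteq C_1 \cup C_3$ joining them. After a tiny perturbation (if necessary), we may assume each $C_i$ meets the $y$-axis only at its grounding point, so $\Gamma$ lies in $\{x>0\}$ except at its two endpoints. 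Concatenating $\Gamma$ with the segment $S = \{0\}\times[y_1,y_3]$ yields a Jordan curve $J$ whose bounded region $B$ is contained in $\{x>0\}$, because $J\subseteq\{x\geq 0\}$ forces the unbounded region $U$ to contain all of $\{x<0\}$ and hence all of the $y$-axis outside $S$.

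Now, $C_2$ starts at $(0,y_2)\in S$ and immediately enters $\{x>0\}$, which locally near $S$ coincides with $B$. Since $C_2$ is disjoint from $C_1$, $C_3$ and from the rest of $S$, it never crosses $J$, so $C_2\setminus\{(0,y_2)\}\subseteq B$. Symmetrically, $C_4$ starts at $(0,y_4)\in U$ (as $y_4>y_3$), never crosses $J$, and therefore lies entirely in $U$. This contradicts $C_2\cap C_4\neq\emptyset$, finishing (1).

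For (2), I would use that each grounded $x$-monotone curve $C_i$ is the graph of a continuous function $f_i:[0,b_i]\to\mathbb{R}$ with $f_i(0)=y_i$. By the intermediate value theorem, if $y_i<y_j$ and $C_i\cap C_j=\emptyset$, then $f_i<f_j$ throughout the common domain $[0,\min(b_i,b_j)]$. The crucial ``sandwiching'' observation is the following: whenever $y_a<y_b<y_c$ with $C_a\cap C_b=C_b\cap C_c=\emptyset$ and $C_a\cap C_c\neq\emptyset$, one must have $b_b<\min(b_a,b_c)$. Indeed, any $x^*\leq\min(b_a,b_c)$ with $f_a(x^*)=f_c(x^*)$ cannot satisfy $x^*\leq b_b$, or else $f_a(x^*)<f_b(x^*)<f_c(x^*)$, a contradiction. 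Now suppose $C_1<C_2<C_3<C_4$ induce $P_4$ in $\overline{G}_<$, so $C_1\cap C_2=C_2\cap C_3=C_3\cap C_4=\emptyset$ while $C_1\cap C_3\neq\emptyset$ and $C_2\cap C_4\neq\emptyset$. Applying the observation to $(C_1,C_2,C_3)$ gives $b_2<b_3$, while applying it to $(C_2,C_3,C_4)$ gives $b_3<b_2$ -- the desired contradiction. I expect the Jordan-curve extraction in part (1) to be the main technical nuisance: controlling curves that may self-intersect or touch the $y$-axis at interior points requires either a general-position assumption, a small perturbation argument, or careful use of the classical fact that any planar path contains a Jordan subarc with the same endpoints.
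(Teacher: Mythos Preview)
Your argument for part (1) is essentially the paper's: both build a Jordan curve from pieces of $C_1$, $C_3$, and the $y$-axis segment between their grounding points, then trap $C_2$ in the bounded component and $C_4$ in the unbounded one. You are more explicit about the technical issues (extracting a simple subarc, perturbing so the curves touch the axis only at their endpoints), which the paper elides.

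For part (2) your route is genuinely different. The paper reuses the region idea from (1): it again forms the bounded region $A$ enclosed by $\alpha_1$, $\alpha_3$, and the $y$-axis, confines $\alpha_2$ to $A$, and then uses $x$-monotonicity to argue that $\alpha_4$, being disjoint from $\alpha_3$ and starting above it, stays above $\alpha_3$ on every common vertical line and hence never enters $A$. You instead bypass the region construction entirely and prove a clean ``sandwich'' lemma on right endpoints---if $y_a<y_b<y_c$, $C_b$ avoids both $C_a$ and $C_c$, and $C_a$ meets $C_c$, then $b_b<\min(b_a,b_c)$---and derive the contradiction $b_2<b_3<b_2$ from two applications. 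Your approach is arguably more direct in the $x$-monotone setting, working purely with the function representation and the intermediate value theorem and never invoking the Jordan curve theorem; the paper's approach has the virtue of treating (1) and (2) by a uniform geometric picture.
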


\begin{proof}
	(1) Suppose that $G_{<}$ contains $M_{1}$ as an induced subgraph, and let $\alpha_{1}<\alpha_{2}<\alpha_{3}<\alpha_{4}$ denote the curves corresponding to the vertices of $M_1$. As $\alpha_{1}$ and $\alpha_{3}$ intersect, the line $\{x=0\}$ and the two curves $\alpha_{1}$, $\alpha_{3}$ enclose a closed bounded region $A$. Curve $\alpha_{2}$ is disjoint from both $\alpha_{1}$ and $\alpha_{3}$, and its endpoint on $\{x=0\}$ belongs to $A$, we have $\alpha_{2}\subset A$. Curve $\alpha_{4}$ is also disjoint from $\alpha_{1}$ and $\alpha_{3}$, but its endpoint on $\{x=0\}$ is not in $A$, so $\alpha_{4}\cap A=\emptyset$. Hence, $\alpha_{2}$ and $\alpha_{4}$ cannot intersect, contradiction.
	
	(2) Suppose that $\overline{G}_{<}$ contains $P_{4}$ as an induced subgraph, and let $\alpha_{1}<\alpha_{2}<\alpha_{3}<\alpha_{4}$ denote the corresponding vertices. As before, the line $\{x=0\}$ and the two curves $\alpha_{1}$, $\alpha_{3}$ enclose a closed bounded region $A$, and $\alpha_{2}\subset A$. Since $\alpha_{1}$ and $\alpha_{3}$ are $x$-monotone, every vertical line intersecting $A$ intersects $\alpha_{1}$ and $\alpha_{3}$ in exactly one point, the intersection point with $\alpha_{1}$ lying below the intersection point with $\alpha_{3}$. Curve $\alpha_{4}$ is disjoint from $\alpha_{3}$, so for every vertical line intersecting $\alpha_{3}$ and $\alpha_{4}$, its intersection with $\alpha_{3}$ is below its intersection with $\alpha_{4}$. Therefore, we have $A\cap \alpha_{4}=\emptyset$, which implies that $\alpha_{2}$ and $\alpha_{4}$ are disjoint, contradiction. See Figure \ref{figure1} for an illustration.
\end{proof}

\begin{figure}[t]
	\begin{center}
\begin{tikzpicture}

\node (v1) at (-0.5,4.5) {};
\node (v2) at (-0.5,-2.5) {};
\draw  (v1) edge (v2);
\draw  plot[smooth, tension=.7] coordinates {(-0.5,2.5) (1,0.5) (2,1.5) (3,0.5) (3.5,3)};
\draw  plot[smooth, tension=.7] coordinates {(-0.5,-1) (1,-1.5) (1.5,-0.5) (2,2) (5,2.5)};
\draw  plot[smooth, tension=.7] coordinates {(-0.5,0) (0,0.5) (0.5,-0.5) (1,0)};
\draw  plot[smooth, tension=.7] coordinates {(-0.5,3.5) (1,3) (2,3) (3.5,4) (4.5,1.5)};
\node at (-0.75,2.5) {$\alpha_{3}$};
\node at (-0.75,-1) {$\alpha_{1}$};
\node at (-0.75,0) {$\alpha_{2}$};
\node at (-0.75,3.5) {$\alpha_{4}$};
\end{tikzpicture}
\caption{An illustration for the proof of part (2) of Lemma \ref{lemma:char}.}
\label{figure1}
\end{center}
\end{figure}
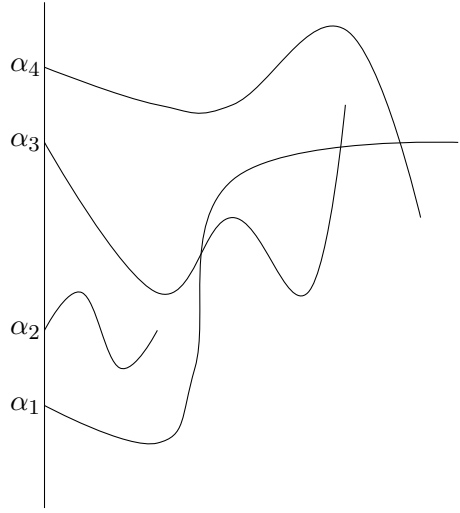

In view of Lemma~\ref{lemma:char}, we may be able to use Theorem~\ref{thm:path} or Theorem~\ref{thm:matching} to argue that $G$, the intersection graph of a collection of $n$ $x$-monotone curves, contains a bi-clique of size $\Omega(n/\log n)$, or its complement, $\overline{G}$, contains a bi-clique of size $\Omega(n)$. However, to apply one of these two theorems, either in $G$ or in $\overline{G}$, the maximum degree of the vertices must be sufficiently small, which is not necessarily the case.

To overcome this difficulty, we use the following statement which guarantees that $G$ or $\overline{G}$ has a large induced subgraph with very few edges.

\begin{lemma}\label{lemma:ordereduni}
	Let $H_{<}$ be an ordered graph and let $\epsilon>0$. Then there exists a constant $c_0=c_0(H_{<},\epsilon)>0$ such that every ordered graph $G_{<}$ on $n$ vertices that does not contain $H_{<}$ as an induced subgraph has the following property. There is a subset $U\subset V(G)$ with $|U|\geq c_0n$ such that either $|E(G[U])|\leq \epsilon\binom{|U|}{2}$ or $|E(G[U])|\geq (1-\epsilon)\binom{|U|}{2}$ holds.
\end{lemma}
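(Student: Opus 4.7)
The plan is to combine an ordered variant of Szemerédi's regularity lemma with a Ramsey-type argument on the reduced graph, using the forbidden induced pattern $H_<$ to eliminate ``medium-density'' configurations. Concretely, I would partition $V(G_<)$ into $k = k(H_<,\epsilon)$ consecutive intervals $I_1 < \cdots < I_k$ of equal size, arranged so that all but an $\eta$-fraction of ordered pairs $(I_i, I_j)$ are $\eta$-regular, with $\eta = \eta(H_<, \epsilon) \ll \epsilon$. Each $\eta$-regular pair $(I_i, I_j)$ with $i < j$ is labeled \emph{sparse}, \emph{dense}, or \emph{medium} according to whether its bipartite density lies in $[0, \epsilon/3)$, in $(1-\epsilon/3, 1]$, or in $[\epsilon/3, 1-\epsilon/3]$ respectively; the irregular pairs receive a fourth label, \emph{bad}.

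The key step uses $H_<$-freeness to forbid large medium transitive tournaments. Suppose there exist $h := |V(H_<)|$ indices $i_1 < \cdots < i_h$ with each pair $(I_{i_a}, I_{i_b})$ being $\eta$-regular and medium. Then the standard counting/embedding lemma for regular partitions produces at least $(\epsilon/6)^{\binom{h}{2}}(n/k)^h > 0$ ordered $h$-tuples $(x_1, \ldots, x_h) \in I_{i_1} \times \cdots \times I_{i_h}$ inducing precisely the edge pattern of $H_<$, contradicting $H_<$-freeness of $G_<$; hence the medium color is $K_h$-free on the reduced graph. Treating the four labels as edge colors of a complete graph on $[k]$ and applying 4-color Ramsey, for $k$ sufficiently large I obtain a monochromatic clique $J \subseteq [k]$ of size $k' := \max(h, 3/\epsilon)$. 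The medium color is excluded by the previous paragraph, and the bad color is excluded by choosing $\eta$ small enough (since only $\eta\binom{k}{2}$ pairs are bad). Setting $U = \bigcup_{i \in J} I_i$, one has $|U| = k'n/k = \Omega(n)$; in the sparse case the edges of $G[U]$ split into between-interval contributions ($\le \frac{\epsilon}{3}\binom{|U|}{2}$) and within-interval contributions ($\le \frac{1}{k'}\binom{|U|}{2} \le \frac{\epsilon}{3}\binom{|U|}{2}$), producing density at most $\epsilon$; the dense case is symmetric.

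The principal technical obstacle is securing a regularity partition into \emph{consecutive intervals}, because the counting lemma step requires the intervals to be linearly ordered so that the embedded $h$-tuple is monotone in $V(G_<)$, whereas the standard Szemerédi lemma produces arbitrary partitions. I would handle this either by running the usual energy-increment proof while restricting every refinement to interval subdivisions, or by first applying the standard lemma and then further partitioning each part into short intervals, checking via pigeonhole that almost all pairs of intervals inherit $\eta'$-regularity for a slightly weaker $\eta'$. A secondary point is tuning the parameters $k, k', \eta$ so that neither the medium nor the bad color class can absorb the Ramsey clique; this is routine and gives a constant $c_0 = k'/k$ depending only on $H_<$ and $\epsilon$, as required by the statement.
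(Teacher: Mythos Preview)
Your approach is essentially sound and follows the classical R\"odl strategy (regularity lemma $+$ Ramsey on the reduced graph $+$ induced embedding lemma), carried out directly in the ordered setting. There is one technical slip: the sentence ``the bad color is excluded by choosing $\eta$ small enough (since only $\eta\binom{k}{2}$ pairs are bad)'' does not stand as written, because a graph on $k$ vertices with only $\eta\binom{k}{2}$ edges can perfectly well contain a $K_{k'}$ when $k\gg k'$, and $k$ is determined by the regularity lemma \emph{after} $\eta$ is fixed, so you cannot tune $\eta$ against $k$. The standard repair is to pick $\eta<1/R_3(k')$, use Tur\'an on the complement of the ``bad'' graph to extract $R_3(k')$ parts all of whose pairwise pairs are regular, and then run $3$-colour Ramsey on those parts; this breaks the circularity and the rest of your argument goes through unchanged.

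That said, the paper's proof takes a genuinely different and much shorter route that avoids ordered regularity altogether. It invokes a result of R\"odl and Winkler: for every ordered graph $H_<$ there exists an \emph{unordered} graph $H'$ such that \emph{every} total ordering of $V(H')$ contains an induced ordered copy of $H_<$. Hence if $G_<$ has no induced ordered copy of $H_<$, the underlying unordered graph $G$ has no induced copy of $H'$, and one simply quotes R\"odl's original (unordered) theorem with $c_0=c_1(H',\epsilon)$. This gives a two-line proof at the price of the non-trivial R\"odl--Winkler black box; your argument is more self-contained but must build the interval-regularity machinery you correctly flagged as the main technical hurdle.
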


Lemma~\ref{lemma:ordereduni} is an easy consequence of the unordered variant of the same statement due to R\"odl \cite{R86} and a result of R\"odl and Winkler~\cite{RW}.

\begin{lemma} \cite{R86}\label{lemma:universal}
	Let $H$ be a graph and let $\epsilon>0$. Then there exists a constant $c_1=c_1(H,\epsilon)>0$ such that every graph $G$ on $n$ vertices that does not contain $H$ as an induced subgraph has the following property. There is a subset $U\subset V(G)$ with $|U|\geq c_1n$ such that either $|E(G[U])|\leq \epsilon\binom{|U|}{2}$ or $|E(G[U])|\geq (1-\epsilon)\binom{|U|}{2}$ holds.
\end{lemma}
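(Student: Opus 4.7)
The plan is to combine Szemer\'edi's regularity lemma, multicolor Ramsey, and a greedy induced embedding argument. Fix $H$ on $h$ vertices and $\epsilon>0$; a small parameter $\delta=\delta(H,\epsilon)>0$ will be chosen at the end. Applying the regularity lemma with parameter $\delta$, one obtains an equitable partition $V(G)=V_0\cup V_1\cup\dots\cup V_M$ with $|V_0|\le\delta n$, common part size $m$, and all but at most $\delta\binom{M}{2}$ pairs $(V_i,V_j)$ being $\delta$-regular; here $M=M(\delta)$ is bounded from above in terms of $\delta$ alone.

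I color each pair $\{i,j\}\in\binom{[M]}{2}$ by one of four colors: \emph{sparse} if $(V_i,V_j)$ is $\delta$-regular with density $d(V_i,V_j)<\epsilon/2$, \emph{dense} if regular with density $>1-\epsilon/2$, \emph{intermediate} if regular with density in $[\epsilon/2,1-\epsilon/2]$, and \emph{irregular} otherwise. A standard cleanup (discard every cluster incident to more than $\sqrt{\delta}M$ irregular pairs, which loses at most $\sqrt{\delta}M$ clusters) yields a set $J\subseteq[M]$ with $|J|\ge(1-\sqrt{\delta})M$ spanning no irregular pair. Choosing $\delta$ small enough forces $|J|\ge R_3(t)$, where $t=\max\{h,\lceil 2/\epsilon\rceil\}$; the three-color Ramsey theorem then furnishes a monochromatic set $I\subseteq J$ of size $t$ in one of the three colors sparse, dense, or intermediate.

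If $I$ is sparse, set $U=\bigcup_{i\in I}V_i$; counting edges within and between clusters gives $|E(G[U])|\le t\binom{m}{2}+\binom{t}{2}\cdot\tfrac{\epsilon}{2}m^2\le\epsilon\binom{|U|}{2}$ as soon as $t\ge 2/\epsilon$, proving the first alternative with $|U|=tm\ge c_1(H,\epsilon)\,n$; the dense case follows by replacing $G$ with $\overline{G}$. The intermediate case must lead to a contradiction: I claim $G[U]$ contains an induced copy of $H$. Fix any injection $\phi:V(H)\to I$, enumerate $V(H)=\{u_1,\dots,u_h\}$ arbitrarily, and greedily pick $x_{u_i}\in V_{\phi(u_i)}$ so that $x_{u_i}x_{u_j}\in E(G)$ iff $u_iu_j\in E(H)$. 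At step $i$, $\delta$-regularity applied to each of the pairs $(V_{\phi(u_j)},V_{\phi(u_i)})$ for $j<i$ shows that, after excluding vertices of atypical degree, the set of valid candidates in $V_{\phi(u_i)}$ has size at least $(\epsilon/2-\delta)^{i-1}m-(i-1)\delta m$, which is positive once $\delta$ is small enough in terms of $h$ and $\epsilon$. The resulting induced copy of $H$ contradicts the hypothesis.

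The main obstacle is purely calibration: $\delta$ must be small enough that three competing requirements are satisfied simultaneously: (i) $\delta\ll\epsilon$, so the intra-cluster edges and the tiny density slack absorbed into the sparse/dense estimates stay below $\epsilon$; (ii) $M(\delta)\ge R_3(t)/(1-\sqrt{\delta})$, so Ramsey applies after the cleanup; and (iii) $(\epsilon/2-\delta)^{h-1}>h\delta$, so the greedy induced embedding never runs out of candidates before placing all $h$ vertices. All three are handled by picking $\delta$ to be a sufficiently small function of $h$ and $\epsilon$, yielding the required constant $c_1(H,\epsilon)=t(1-\delta)/M(\delta)>0$ via $|U|=tm\ge t(1-\delta)n/M(\delta)$.
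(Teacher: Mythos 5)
The paper does not prove Lemma~\ref{lemma:universal}: it is quoted as a black box from R\"odl (1986), so there is no in-paper argument to compare with. What you propose --- Szemer\'edi's regularity lemma, then multicolor Ramsey on the reduced graph, then a greedy induced embedding in the ``intermediate'' case --- is the standard textbook route to R\"odl's theorem, and the overall architecture is sound. The calibration requirements you list at the end are also the right ones.

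There is, however, a genuine error in the cleanup step. Discarding every cluster incident to more than $\sqrt{\delta}M$ irregular pairs only bounds the degrees of the surviving clusters in the auxiliary irregularity graph; it does \emph{not} make the surviving set $J$ ``span no irregular pair.'' (A perfect matching of irregular pairs among the clusters would survive this cleanup entirely and still leave $M/2$ irregular pairs inside $J$.) Without all pairs in $J$ being regular, you cannot legitimately $3$-color $\binom{J}{2}$ by sparse/dense/intermediate and invoke $R_3(t)$. The standard fix is to apply Tur\'an (or the Caro--Wei bound) to the irregularity graph: it has at most $\delta\binom{M}{2}$ edges, hence an independent set of size at least $M/(1+\delta M)\gtrsim 1/(2\delta)$; take $J$ to be this independent set, and invoke the regularity lemma with the lower bound $m_0$ on the number of parts chosen so large that $1/(2\delta)\geq R_3(t)$. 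Your subsequent Ramsey step then goes through unchanged.

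Two smaller points. First, in the embedding step, the vertices you must exclude at stage $i$ are those atypical with respect to the \emph{future} candidate sets $C_j$, $j>i$, so the correct deduction is $(h-i)\delta m$, not $(i-1)\delta m$; both are $<h\delta m$, so your condition (iii) still suffices, but the bookkeeping as written is off. Second, you rely implicitly on a version of the regularity lemma that guarantees $M\geq 1/\delta$; this is fine but worth stating, since the size of $J$ after the Tur\'an cleanup, and hence the applicability of Ramsey, depends on it.
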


\begin{lemma} \cite{RW}\label{rodlwinkler}
For every ordered graph $H_<$, there exists an unordered graph $H'$ with the property that for any total ordering $\prec$ on $V(H')$, the ordered graph $H'_{\prec}$ contains $H_{<}$ as an induced subgraph.
\end{lemma}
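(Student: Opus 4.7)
The plan is to prove Lemma~\ref{rodlwinkler} by induction on $k=|V(H_<)|$, with the base case $k=1$ immediate (take $H'=K_1$). For the inductive step, let $v_1<\cdots<v_k$ be the vertices of $H_<$, set $H^{*}=H_<\setminus\{v_1\}$, and let $A=N_{H_<}(v_1)\subseteq V(H^{*})$. The inductive hypothesis applied to $H^{*}$ gives an unordered graph $F_0$ such that every ordering of $V(F_0)$ contains an induced copy of $H^{*}$. As a preliminary step, I would strengthen this to a graph $F$ on $n$ vertices with the property that for every ordering $\prec$ of $V(F)$ and every subset $U\subseteq V(F)$ with $|U|\geq n/2$, the ordered graph $F_\prec[U]$ contains an induced copy of $H^{*}$. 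This strengthening can be obtained by a multi-level iteration of the basic statement, applying it to progressively larger "targets" so that any large induced subgraph inherits the conclusion.

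Construction of $H'$. Let $V(H')=V(F)\cup T$ with $T=\{u_S:S\in\binom{V(F)}{|A|}\}$, where each $u_S$ is adjacent in $H'$ precisely to the vertices of $S$, the edges inside $V(F)$ are those of $F$, and $T$ is an independent set. Each topper $u_S$ is intended to play the role of the minimum vertex $v_1$: given any ordering $\prec$ of $V(H')$, if some $u_S$ is $\prec$-less than an induced $H^{*}$-copy $C\subseteq V(F)$ whose $A$-vertices are exactly $S$, then $\{u_S\}\cup C$ induces $H_<$ and we are done.

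To locate such a pair, consider the $\prec$-smallest topper $u_{S^{*}}$ and let $V'\subseteq V(F)$ consist of those $V(F)$-vertices that are $\succ u_{S^{*}}$. If $|V'|\geq n/2$, the strengthened induction produces an induced $H^{*}$-copy $C\subseteq V'$, whose $A$-vertices form some subset $S'\subseteq V'$. The main obstacle is that $S'$ need not equal $S^{*}$, so $u_{S^{*}}$ fails to have the required adjacencies to $C$. To overcome this mismatch, I would enlarge the construction by replacing the single gadget $(F,T)$ by many disjoint copies, so that by pigeonhole some gadget has both (i) its $\prec$-smallest topper falling below a large fraction of that gadget's $V(F)$-part, and (ii) an $H^{*}$-copy with $A$-vertices matching the subscript of that topper. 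Bounding the number of gadgets, setting up the strengthened form of induction, and executing the pigeonhole/matching step constitute the main technical content of the argument.
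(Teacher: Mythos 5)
The paper does not actually prove Lemma~\ref{rodlwinkler}; it only cites the result of R\"odl and Winkler~\cite{RW}, so your proposal can only be judged on its own merits. As it stands it has a fatal gap. Each topper $u_S$ is designed to play the role of $v_1$, the $\prec$-minimum vertex of the sought copy of $H_<$, so it must lie $\prec$-below the $H^{*}$-copy found inside $V(F)$. But the adversary choosing $\prec$ can simply place \emph{every} topper $\prec$-above \emph{every} vertex of $V(F)$; then $V'=\emptyset$ for the $\prec$-smallest topper, and in fact no topper at all can serve as a $\prec$-minimum. Your argument only treats the case $|V'|\geq n/2$ and never addresses $|V'|<n/2$, which the adversary achieves trivially. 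Taking many disjoint gadgets does not escape this: the adversary puts all toppers of all gadgets last in $\prec$, and since each $F$-part alone is only guaranteed to contain the $(k-1)$-vertex graph $H^{*}$, not $H_<$, no copy of $H_<$ is produced anywhere. This is an obstruction to the whole strategy of forcing $v_1$ onto a pendant gadget vertex, because the adversary fully controls where added vertices fall in $\prec$.

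Two secondary gaps are also unresolved. The ``strengthening'' of the inductive hypothesis (that every ordering of every vertex subset of size at least $n/2$ yields an induced $H^{*}$) is asserted via ``multi-level iteration'' but not proved, and it is itself a nontrivial robustness claim. And the pigeonhole fix for the mismatch $S'\neq S^{*}$ is only gestured at: even in the favorable case $|V'|\geq n/2$, nothing ties the $A$-vertices of whatever $H^{*}$-copy the induction happens to produce to the index $S^{*}$ of the smallest topper, and the adversary can in each gadget independently make these differ. Any successful proof of the lemma has to cope with the fact that added vertices can be placed arbitrarily in the ordering, which your one-sided gadget construction does not.
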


\begin{proof}[Proof of Lemma~\ref{lemma:ordereduni}]
Let $H'$ be the graph whose existence is ensured by Lemma~\ref{rodlwinkler}. For every ordered graph $G_{<}$ that does not contain $H_{<}$ as an induced subgraph, the underlying unordered graph $G$ does not contain $H'$ as an induced subgraph. Hence, the statement is true with $c_0=c_1(H',\epsilon)$, where $c_1(H',\epsilon)$ is the constant defined in Lemma \ref{lemma:universal}.
\end{proof}

\begin{corollary}\label{cor:universal}
		Let $H_{<}$ be an ordered graph and let $\delta>0$. There exists a constant $c_2=c_2(H_{<},\delta)>0$ such that every  ordered graph $G_{<}$ on $n$ vertices that does not contain $H_{<}$ as an induced subgraph has the following property. There is a subset $U\subset V(G)$ with $|U|\geq c_2n$ such that either $\Delta(G[U])\leq \delta|U|$ or $\Delta(\overline{G}[U])|\leq \delta|U|$ holds.
\end{corollary}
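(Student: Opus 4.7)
The plan is to derive the corollary from Lemma \ref{lemma:ordereduni} by a simple vertex-deletion argument that turns an $\epsilon$-sparse (or $\epsilon$-dense) induced subgraph into one with uniformly bounded maximum degree. The parameter $\epsilon$ fed into Lemma \ref{lemma:ordereduni} will be chosen as a small multiple of $\delta$.

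First I would apply Lemma \ref{lemma:ordereduni} to $G_<$ with parameter $\epsilon := \delta/8$ (any $\epsilon \le \delta/4$ works) to obtain a subset $U_0 \subseteq V(G)$ of size $|U_0| \geq c_0(H_<,\epsilon)\, n$ such that either $|E(G[U_0])| \le \epsilon\binom{|U_0|}{2}$ or $|E(G[U_0])| \ge (1-\epsilon)\binom{|U_0|}{2}$. The two cases are completely symmetric by passing to the complement, so I would treat only the sparse one: the same argument applied to $\overline{G}[U_0]$ handles the dense case (note that $\overline{G}_<$ also avoids some fixed ordered graph, namely the complement of $H_<$, but in fact we do not even need this, since we only use the edge count of $G[U_0]$, not the induced-subgraph property, in what follows).

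Assume $|E(G[U_0])| \le \epsilon\binom{m}{2}$ where $m := |U_0|$. The sum of degrees in $G[U_0]$ is at most $\epsilon m(m-1)$, so by a trivial averaging, the set $W$ of vertices of $U_0$ whose degree in $G[U_0]$ exceeds $\delta m/2$ has size at most $2\epsilon(m-1)/\delta < m/2$. Let $U := U_0 \setminus W$; then $|U| > m/2 \ge (c_0/2)\, n$, and every $v \in U$ satisfies
$$\deg_{G[U]}(v) \le \deg_{G[U_0]}(v) \le \frac{\delta m}{2} \le \delta\, |U|\cdot \frac{m}{2|U|}\le \delta\, |U|,$$
where the last inequality uses $|U| \ge m/2$. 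Hence $\Delta(G[U]) \le \delta |U|$, and setting $c_2 := c_0(H_<,\delta/8)/2$ completes the proof.

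There is essentially no obstacle: the only thing to verify is the arithmetic in the choice of $\epsilon$ versus $\delta$, which is routine. The real work has already been done in Lemma \ref{lemma:ordereduni} (and, through it, in Lemmas \ref{lemma:universal} and \ref{rodlwinkler}); the corollary just packages its conclusion into the form needed for the subsequent application of Theorems \ref{thm:path} and \ref{thm:matching}, which require a small maximum-degree hypothesis rather than a small edge-count hypothesis.
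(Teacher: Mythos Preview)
Your proof is correct and follows essentially the same approach as the paper: apply Lemma~\ref{lemma:ordereduni} with $\epsilon$ a small multiple of $\delta$, then delete the vertices of high degree in $U_0$ and observe that at most half the vertices are removed. The paper uses $\epsilon=\delta/4$ and threshold $2\epsilon|U_0|$, which is exactly your threshold $\delta m/2$; your choice $\epsilon=\delta/8$ is slightly more conservative but, as you note, any $\epsilon\le\delta/4$ suffices.
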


\begin{proof}
Let $\epsilon=\delta/4$, and let $c_0=c_0(H_{<},\epsilon)$ be the constant given by Lemma \ref{lemma:ordereduni}. We show that $c_2=c_0/2$ meets the requirements.

Let $G_{<}$ be an ordered graph on $n$ vertices that does not contain an induced copy of $H_{<}$.  Then there exists $U_{0}\subset V(G)$ with $|U_{0}|\geq c_0n$ such that either $|E(G[U_{0}])|\leq \epsilon\binom{|U_{0}|}{2}$ or $|E(G[U_{0}])|\geq (1-\epsilon)\binom{|U_{0}|}{2}$ holds. Without loss of generality, suppose that $|E(G[U_{0}])|\leq \epsilon\binom{|U_{0}|}{2}$; the other case can be handled in a similar manner. Let $U_{1}$ be the set of vertices $u\in U_{0}$ whose degree in $G[U_{0}]$ is larger than $2\epsilon |U_{0}|$. Clearly, we have $|U_{1}|<|U_{0}|/2$. Setting $U=U_{0}\setminus U_{1}$, we obtain $|U|> |U_{0}|/2\geq c_2n$, and the degree of every vertex in $G[U]$ is at most $2\epsilon |U_{0}|<4\epsilon |U|=\delta|U|$.
\end{proof}

Now we are in a position to prove Lemma \ref{lemma:grounded}.

\begin{proof}[Proof of Lemma \ref{lemma:grounded}]
	Let $c'=c(M_{1})$ be the constant defined in Theorem \ref{thm:matching}, and let $c''=c(4)$ be the constant defined in Theorem \ref{lemma:path}. Set $\delta=\min\{c',c''\}$, and let $c_2=c_2(M_1,\delta)$ be the constant defined in Corollary \ref{cor:universal}.
		
	By Lemma \ref{lemma:char} (1), there exists an ordering $<$ on ${G}$ such that $G_{<}$ does not contain $M_{1}$ as an induced subgraph. Hence, there exists $U\subset V(G)$ such that $|U|\geq c_{2}n$, and either $\Delta(G[U])<\delta |U|$, or $\Delta(\overline{G}[U])<\delta|U|$. In the first case, $\overline{G}[U]$ contains a bi-clique of size $c'|U|\geq c'c_{2}n$. In the second case, by Lemma \ref{lemma:char} (2), $\overline{G}_{<}$ does not contain $P_{4}$ as an induced subgraph, so $G[U]$ contains a bi-clique of size $c''|U|/\log |U|\geq c''c_{2}n/\log n$. Thus, the statement is true with $c=\delta c_2$.
\end{proof}

Next, we show that Theorem \ref{thm:curves} holds not only for families of grounded curves, but also for families $\mathcal{C}$ of $x$-monotone curves, each of which intersects the same vertical line. Clearly, such a line splits $\mathcal{C}$ into two families of grounded curves, and the intersection graph of $\mathcal{C}$ is the union of the intersection graphs of these two families. In order to exploit this property, we make use of the following technical lemma. The constants $c$ and $c'$ appearing in Lemma~\ref{lemma:hereditary} are different from all previously used constants denoted by the same letters. A similar lemma was established in \cite{FP09}, but it is not suitable for our purposes.

A family of graphs $\mathcal{G}$ is called \emph{hereditary}, if for every $G\in\mathcal{G}$, every induced subgraph of $G$ is also a member of $\mathcal{G}$. 
For any pair of graphs $G_{1}$ and $G_{2}$ with  $V(G_{1})=V(G_{2})$, the \emph{union of $G_{1}$ and $G_{2}$} is defined as the graph $G_{1}\cup G_{2}$ whose vertex set is $V(G_{1})$ and edge set is $E(G_{1})\cup E(G_{2})$.

\begin{lemma}\label{lemma:hereditary}
	Let $\mathcal{G}$ be a hereditary family of graphs. Suppose that there exist a constant $c, 0<c<1,$ and a monotone increasing function $f:\mathbb{N}\rightarrow \mathbb{R}^{+} $ such that each member $G\in \mathcal{G}$ on $n$ vertices contains either a bi-clique of size at least $n/f(n)$, or $\overline{G}$ contains a bi-clique of size at least $cn$.

Then there exists a constant $c'>0$ with the following property. If $G_{1},G_{2}\in\mathcal{G}$, $V(G_{1})=V(G_{2})$, and $|V(G_{1})|=n$, then $G_{1}\cup G_{2}$ contains a bi-clique of size at least $c'n/f(n)$ or the complement of $G_{1}\cup G_{2}$ contains a bi-clique of size at least $c'n$.
\end{lemma}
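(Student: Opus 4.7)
The plan is to prove the lemma by iteratively applying the hypothesis to $G_1$ and $G_2$ on successively restricted induced subgraphs, combining the resulting bi-independent pairs via pigeonhole to eventually certify either a bi-clique in $G_1 \cup G_2$ or a bi-clique in its complement. First apply the hypothesis to $G_1$: either $G_1$ contains a bi-clique of size at least $n/f(n)$, which is also a bi-clique in $G_1 \cup G_2$, or we obtain disjoint sets $A,B\subseteq V$ with $|A|=|B|\ge cn$ and no $G_1$-edges between them. In the latter case, apply the hypothesis to $G_2[A\cup B]$, which lies in $\mathcal{G}$ by the hereditary assumption: either $G_2$ contains a bi-clique of size at least $2cn/f(2cn)\ge 2cn/f(n)$ (WLOG $c\le 1/2$, so monotonicity of $f$ gives the inequality), finishing the argument, or we obtain a bi-independent pair $(X,Y)\subseteq A\cup B$ with $|X|=|Y|\ge 2c^2 n$ and no $G_2$-edges between them.

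Next, analyze $(X,Y)$ relative to $(A,B)$ by pigeonhole on the four intersections $X\cap A$, $X\cap B$, $Y\cap A$, $Y\cap B$. If $|X\cap A|\ge c^2n$ and $|Y\cap B|\ge c^2n$ (or the symmetric case), then $(X\cap A,\,Y\cap B)$ consists of disjoint sets inheriting "no $G_1$-edges" from the $(A,B)$ structure and "no $G_2$-edges" from the $(X,Y)$ structure, giving a bi-clique in $\overline{G_1\cup G_2}$ of size $\ge c^2 n$. Otherwise, both $X$ and $Y$ must lie predominantly inside the same part of the bipartition, say $A$, and we obtain a new pair $(X',Y')\subseteq A$ of size $\ge c^2 n$ with no $G_2$-edges between them.

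We iterate in the reduced case: apply the hypothesis to $G_1[X'\cup Y']$ to obtain either a bi-clique in $G_1\cup G_2$ of size $\ge 2c^2n/f(n)$, or a bi-independent pair $(X'',Y'')$ in $G_1$. If $(X'',Y'')$ respects the partition $(X',Y')$ (again by pigeonhole on the four intersections), we combine "no $G_2$" from $(X',Y')$ with "no $G_1$" from $(X'',Y'')$ to get a bi-clique in $\overline{G_1\cup G_2}$. Otherwise we pass to a further nested pair inside $X'$ or $Y'$ with no $G_1$-edges between them. Continuing to alternate applications of the hypothesis between $G_1$ and $G_2$, at each stage either we win outright, or the working pair shrinks by a factor of $c$ while alternating which graph's edges are forbidden.

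Choose $c'=c^{K}$ for a sufficiently large constant $K=K(c)$. The main obstacle is showing that this process terminates: while the "non-respecting" branch could in principle continue, once the working pair size drops to order $c'n$ the bi-clique branch of the hypothesis automatically produces a bi-clique in $G_1\cup G_2$ of size $\ge c'n/f(n)$, since $f$ is monotone. A careful accounting of the constants shows that after at most $K$ rounds of non-respecting, either a bi-clique branch or a respecting event must trigger and yield a structure of size at least $c'n/f(n)$ or $c'n$ respectively, completing the proof. The delicate point is to absorb the constant factor lost at each step into the definition of $c'$ so that the two guaranteed sizes match the conclusion of the lemma.
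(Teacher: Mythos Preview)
Your plan has a genuine gap in the termination argument. You write that ``after at most $K$ rounds of non-respecting, either a bi-clique branch or a respecting event must trigger,'' but you give no reason why this should be so, and indeed there is none: nothing in the hypothesis forces either branch. At each stage the disjunction may land on the complement side and the resulting pair $(X,Y)$ may again fall entirely inside one block of the current bipartition. If this happens $K$ times in a row (which is perfectly possible for all $n$), your working pair has size roughly $c^{K+1}n$, but you have only a $G_1$-free \emph{or} $G_2$-free structure, never both simultaneously. The sentence ``once the working pair size drops to order $c'n$ the bi-clique branch of the hypothesis automatically produces a bi-clique'' is simply false: the hypothesis is a disjunction, not a choice, and the first alternative need not hold on a set of any particular size.

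The paper avoids this trap by \emph{not} alternating. It first iterates the complement-bi-clique branch of the hypothesis for $G_1$ alone, building a binary tree of depth $k=1+\lceil\log_2(1/c)\rceil$ whose $2^k$ leaves $U_{k,1},\dots,U_{k,2^k}$ are pairwise $G_1$-independent and each of size $\ge c^k n$. Only then is the hypothesis applied once to $G_2$ on the union $U=\bigcup_j U_{k,j}$, yielding a pair $(A,B)$ with no $G_2$-edges. The point of having $2^k>1/c$ leaves is that after pigeonholing $A$ into some leaf $U_{k,j}$, what remains of $B$ outside that leaf is still at least $(c-2^{-k})|U|>0$, and a second pigeonhole places a large piece of $B$ in a \emph{different} leaf $U_{k,j'}$. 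Now the $G_1$-independence (between distinct leaves) and $G_2$-independence (between $A$ and $B$) combine to give the desired bi-clique in $\overline{G_1\cup G_2}$. Your two-block alternation never accumulates enough parts to make this final pigeonhole succeed.
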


\begin{proof}
Let $k=1+\lceil\log_{2}(1/c)\rceil$. We show that the constant $c'=c^{k+1}/2$ will meet the requirements. Let $G_{1},G_{2}\in\mathcal{G}$ such that $V=V(G_{1})=V(G_{2})$ and $|V|=n$.

We can suppose that if $U\subset V$ such that $|U|\geq \frac{c'}{c}n$, then both $\overline{G}_{1}[U]$ and $\overline{G}_{2}[U]$ contain a bi-clique of size $c|U|$. Indeed, otherwise, either $G_{1}[U]$ or $G_{2}[U]$ contains a bi-clique of size $c|U|/f(|U|)\geq c'n/f(n)$, so $G_{1}\cup G_{2}$ also contains a bi-clique of size $c'n/f(n)$, and we are done.	

For $i=0,\dots,k$, we define disjoint sets $U_{i,1},\dots,U_{i,2^{i}}\subset V$ such that $|U_{i,j}|\geq c^{i}n$ for $j=1,\dots,2^{i}$, and there is no edge between $U_{i,j}$ and $U_{i,j'}$ in $G_{1}$ for $1\leq j<j'\leq 2^{i}$. Let $U_{0,1}=V$. If $U_{i,1},\dots,U_{i,2^{i}}$ are already defined for $i<k$, let $(U_{i+1,2j-1},U_{i+1,2j})$ be a bi-clique of size $c|U_{i,j}|$ in $\overline{G}_{1}[U_{i,j}]$. As $|U_{i,j}|=c^{i}n>\frac{c'}{c}n$, such a bi-clique always exists.

Now let $U=\bigcup_{j=1}^{2^{k}}U_{k,j}$. Then $|U|=2^{k}c^{k}n> \frac{c'}{c}n$, so $\overline{G}_{2}[U]$ contains a bi-clique $(A,B)$ of size at least $c|U|$. Therefore, there exists $1\leq j\leq 2^{k}$ such that $|U_{k,j}\cap A|\geq |A|/2^{k}\geq c|U|/2^{k}=c^{k+1}n>c'n$, and there exists $1\leq j'\leq k$ such that $j\neq j'$ and
  $$|U_{k,j'}\cap B|\geq \frac{|B|-|U_{k,j}|}{2^{k}}\geq \frac{c|U|-|U|/2^{k}}{2^{k}}= \left(c^{k+1}-\frac{c^{k}}{2^{k}}\right)n\geq \frac{c^{k+1}n}{2}=c'n.$$
There is no edge between $A\cap U_{k,j}$ and $B\cap U_{k,j'}$ in $\overline{G}_{1}$ and $\overline{G}_{2}$, so the complement of $G_{1}\cup G_{2}$ contains a bi-clique of size $c'n$.
\end{proof}

Now we can prove Theorem \ref{thm:curves} for collections of $x$-monotone curves that intersect the same vertical line.

\begin{lemma}\label{lemma:line}
   Let $\mathcal{C}$ be a collection of $n$ $x$-monotone curves such that each member of $\mathcal{C}$ intersects a vertical line $l$. Let $G$ be the intersection graph of $\mathcal{C}$. Then either $G$ contains a bi-clique of size $\Omega(n/\log n)$, or the complement of $G$ contains a bi-clique of size $\Omega(n)$.
\end{lemma}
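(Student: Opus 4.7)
The plan is to reduce to the grounded case (Lemma~\ref{lemma:grounded}) via Lemma~\ref{lemma:hereditary}. Without loss of generality, translate so that the line $l$ is the $y$-axis $\{x=0\}$. Since each curve $\alpha\in\mathcal{C}$ is $x$-monotone and meets $l$, it crosses $l$ in exactly one point $p_\alpha$. Cutting $\alpha$ at $p_\alpha$ produces two $x$-monotone pieces: the right half $\alpha^+ = \alpha\cap\{x\ge 0\}$, which is a grounded $x$-monotone curve, and the left half $\alpha^- = \alpha\cap\{x\le 0\}$, which becomes a grounded $x$-monotone curve after reflecting across $l$.

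Let $\mathcal{C}^+ = \{\alpha^+ : \alpha\in\mathcal{C}\}$ and $\mathcal{C}^-$ be the reflected left halves, and denote their intersection graphs by $G^+$ and $G^-$ on the common vertex set $\mathcal{C}$. The first observation is that $G = G^+\cup G^-$: two curves $\alpha,\beta\in\mathcal{C}$ meet at some point $q$, and $q$ lies either in $\{x\ge 0\}$ (so $\alpha^+\cap\beta^+\ne\emptyset$) or in $\{x\le 0\}$ (so $\alpha^-\cap\beta^-\ne\emptyset$); conversely any intersection of the halves comes from an intersection of the original curves.

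The second observation is that the class $\mathcal{G}$ of intersection graphs of finite collections of grounded $x$-monotone curves is hereditary: passing to an induced subgraph corresponds to discarding some of the curves, leaving a collection of the same type. By Lemma~\ref{lemma:grounded}, every $n$-vertex graph in $\mathcal{G}$ contains either a bi-clique of size $cn/\log n$ or a bi-clique of size $cn$ in its complement, so $\mathcal{G}$ satisfies the hypothesis of Lemma~\ref{lemma:hereditary} with $f(n) = (\log n)/c$ and the constant $c$ from Lemma~\ref{lemma:grounded}. Since $G^+,G^-\in\mathcal{G}$ share the vertex set $\mathcal{C}$, Lemma~\ref{lemma:hereditary} gives a constant $c'>0$ such that $G = G^+\cup G^-$ contains a bi-clique of size $c'n/\log n$ or $\overline{G}$ contains a bi-clique of size $c'n$, which is exactly the conclusion.

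There is no serious obstacle here; the only minor points to handle cleanly are to note that the splitting at $l$ is well defined (using $x$-monotonicity to guarantee a single crossing with $l$) and to check that $G$ is really the union $G^+\cup G^-$ and not merely contained in it, so that applying Lemma~\ref{lemma:hereditary} yields a statement about the true intersection graph rather than an overgraph.
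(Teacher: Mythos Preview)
Your proof is correct and follows essentially the same approach as the paper: split each curve at the vertical line into a left and a right grounded piece, write $G$ as the union of the two resulting intersection graphs, and then apply Lemma~\ref{lemma:hereditary} together with Lemma~\ref{lemma:grounded} for the hereditary class of grounded $x$-monotone intersection graphs. Your added remarks (the reflection of the left halves, the verification that $G=G^+\cup G^-$, and the explicit choice $f(n)=(\log n)/c$) are all fine and match the paper's argument.
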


\begin{proof}
	Let $\mathcal{G}$ be the family of intersection graphs of collections of grounded $x$-monotone curves. Clearly, $\mathcal{G}$ is hereditary. By Lemma \ref{lemma:grounded}, there exists a constant $c>0$ such that each $G_{0}\in\mathcal{G}$ on $n$ vertices contains either a bi-clique of size $cn/\log n$, or the complement of $G_{0}$ contains a bi-clique of size $cn$. Thus, by Lemma \ref{lemma:hereditary}, there exists a constant $c'>0$ such that if $G_{1},G_{2}\in\mathcal{G}$ with $V(G_{1})=V(G_{2})$ and $|V(G_{1})|=n$, then either $G_{1}\cup G_{2}$ contains a bi-clique of size $c'n/\log n$, or the complement of $G_{1}\cup G_{2}$ contains a bi-clique of size $c'n$.
	
	The vertical line $l$ cuts each $x$-monotone curve $\alpha\in\mathcal{C}$ into a left and a right part, denoted by $\alpha_{1}$ and $\alpha_{2}$. Let $\mathcal{C}_{1}=\{\alpha_{1}:\alpha\in\mathcal{C}\}$, $\mathcal{C}_{2}=\{\alpha_{2}:\alpha\in\mathcal{C}\}$, and let $G_{1}$ and $G_{2}$ be the intersection graphs of $\mathcal{C}_{1}$ and $\mathcal{C}_{2}$, respectively. Then $G_{1}, G_{2}\in\mathcal{G}$ and $G=G_{1}\cup G_{2}$, so we are done.
\end{proof}

Finally, everything is ready to prove our main theorem.

\begin{proof}[Proof of Theorem \ref{thm:curves}]
	For each $\alpha\in\mathcal{C}$, let  $r(\alpha)$ denote the $x$-coordinate of the right endpoint of $\alpha$. Without loss of generality, we can suppose that $r(\alpha)\neq r(\alpha')$  for $\alpha\neq \alpha'$. Let $\alpha_{1},\dots,\alpha_{n}$ be the enumeration of the curves in $\mathcal{C}$ such that $r(\alpha_{1})<\dots<r(\alpha_{n})$.
	
	Set $m=\lfloor n/3\rfloor$ and consider a vertical line $l=\{x=r\}$, where $r(\alpha_{m})<r<r(\alpha_{m+1})$. Let $\mathcal{C}'$ denote the set of curves in $\mathcal{C}$ which have a nonempty intersection with $l$. We distinguish two cases.
	
	{\em Case 1:} $|\mathcal{C}'|\geq m$. Let $G'$ be the intersection graph of $\mathcal{C}'$. Then by Lemma \ref{lemma:line}, either $G'$ contains a bi-clique of size $\Omega(m/\log m)=\Omega(n/\log n)$, or $\overline{G}'$ contains a bi-clique of size $\Omega(m)=\Omega(n)$.
	
	{\em Case 2:} $|\mathcal{C}'|<m$. Let $A=\{C_{i}:i\leq m\}$ and $B=\mathcal{C}\setminus(A\cup \mathcal{C}')$. Then $|B|\geq n/3$, and no curve in $A$ intersects any curve in $B$, because $A$ and $B$ are separated by $l$. Hence, $\overline{G}$ contains a bi-clique of size $m=\Omega(n)$.
\end{proof}

\section{Sharp threshold for intersection graphs--Proof of Theorem \ref{thm:threshold}}\label{sect:threshold}

In this section, we prove Theorem \ref{thm:threshold}. Part (1) of the theorem is an easy consequence of the following result of Pach and T\'oth \cite{PT06+}; see also \cite{PRY18}.

\begin{lemma}(Pach, T\'oth \cite{PT06+})\label{lemma:4partition}
	Let $V$ be an $n$-element set and let $V_{1},V_{2},V_{3},V_{4}$ be a partition of $V$ into $4$ sets. Let $G$ be a graph on the vertex set $V$ such that $V_{i}$ spans a clique in $G$ for $i=1,2,3,4$.

Then $G$ can be realized as the intersection graph of convex sets.
\end{lemma}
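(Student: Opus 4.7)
My plan is to build the convex sets $C_v$ explicitly. The basic architecture is dictated by the partition: pick four anchor points $p_1,p_2,p_3,p_4$ in convex position in the plane (say, corners of a square), and arrange that every $C_v$ with $v\in V_i$ contains $p_i$. Since $V_i$ is a clique in $G$, any two such sets automatically intersect, which handles the intra-part edges for free.

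To encode the cross-edges, for every edge $vw \in E(G)$ with $v\in V_i,\, w\in V_j,\, i\neq j$, I would introduce a ``meeting point'' $m_{vw}$ and force both $C_v$ and $C_w$ to contain it. Setting
\[
C_v \;=\; \operatorname{conv}\!\Bigl(\{p_i\} \cup \{m_{vw} : w\notin V_i,\; vw\in E(G)\}\Bigr)
\]
makes $C_v$ convex by construction, contains $p_i$, and puts $m_{vw}$ into $C_v \cap C_w$ whenever $vw$ is an edge, giving the ``if'' direction of the intersection graph correspondence immediately.

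The delicate step --- and the main obstacle --- is placing the meeting points so that no \emph{spurious} intersections arise: for every non-edge $vw$ we need $C_v \cap C_w = \emptyset$. Two kinds of accidents can ruin this: a meeting point $m_{v'w'}$ belonging to another pair could lie inside $C_v$, forcing an unwanted intersection with $C_{w'}$; or the polygons $C_v, C_w$ could overlap geometrically without sharing a designated meeting point. I would control the first issue by placing the four anchors and all meeting points on a single strictly convex curve (an arc of a parabola, say): then the relative interior of any inscribed polygon is disjoint from the rest of the curve, so no $C_v$ can swallow an unintended meeting point. For the second issue, I would order the meeting points along the curve by grouping them first by the pair $(i,j)$ they serve and then by the vertex they are attached to, so that for any non-edge $vw$ the vertex sets of $C_v$ and $C_w$ lie in arcs separable by a chord of the curve.

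The hardest part will be checking that a single ordering along the convex curve works simultaneously for all six bipartite graphs between the parts. The number $4$ is likely to enter crucially here: with four anchors in convex position on the curve, the six pairs $\{i,j\}$ split into four ``consecutive'' and two ``opposite'' pairs, which should give just enough room to distribute the six bundles of meeting points consistently. If the direct inscribed-polygon construction proves too rigid, my fallback is to first handle the two-part case --- which is easy, since any bipartite graph is realizable as an intersection graph of segments in two generic directions --- and then glue four such realizations, one along each side of the square $p_1p_2p_3p_4$, into a single collection of convex sets, using the corner anchors to amalgamate the pieces at each $V_i$.
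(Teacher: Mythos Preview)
The paper does not prove this lemma; it quotes it from Pach--T\'oth~\cite{PT06+} and uses it as a black box. So there is nothing to compare your argument against, and the question is simply whether your sketch can be completed.

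It cannot, at least not along the main route you describe. The inscribed-polygon construction (all anchors and all meeting points on one strictly convex curve) already fails on a four-vertex example. Take $V_i=\{v_i\}$ for $i=1,2,3,4$, with cross-edges $v_1v_3$ and $v_2v_4$ and no others. Your sets are then the chords
\[
C_{v_1}=[p_1,m_{13}],\quad C_{v_3}=[p_3,m_{13}],\quad C_{v_2}=[p_2,m_{24}],\quad C_{v_4}=[p_4,m_{24}].
\]
The union $C_{v_1}\cup C_{v_3}$ is an arc inside the disc joining $p_1$ to $p_3$, and $C_{v_2}\cup C_{v_4}$ is an arc joining $p_2$ to $p_4$. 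Since $p_1,p_2,p_3,p_4$ alternate on the boundary circle, any two such arcs in the disc must cross; hence some $C_{v_i}$ with $i\in\{1,3\}$ meets some $C_{v_j}$ with $j\in\{2,4\}$, producing a spurious edge. No placement of $m_{13},m_{24}$ on the curve avoids this. More generally, whenever the graph has an edge between the two ``opposite'' parts $V_1,V_3$ and another edge between $V_2,V_4$, with no edge between those four vertices across the other pairs, the convex-curve scheme is doomed for the same topological reason.

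Your fallback inherits the same blind spot: a square has four sides but there are six part-pairs, and the two you omit are exactly the diagonal pairs $\{1,3\}$ and $\{2,4\}$ that cause the obstruction above. Any correct construction has to break the symmetry of the square and route the two diagonal bipartite graphs so that their ``arms'' do not have to cross; this is precisely what the Pach--T\'oth argument arranges, and it is the idea missing from your proposal.
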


\begin{proof}[Proof of Theorem \ref{thm:threshold}, part (1)]
	Let $V$ be an $n$-element set and let $V_{1},V_{2},V_{3},V_{4}$ be a partition of $V$ into four sets of size roughly $n/4$. Consider the graph $G$ in which $V_{1},V_{2},V_{3},V_{4}$ are cliques, and any pair of vertices $\{u,v\}$, where $u\in V_{i}$ and $v\in V_{j}$ with $i\neq j$ is joined by an edge with probability $\epsilon$. Then with probability tending to $1$, $G$ has at most $(\frac{1}{4}+\epsilon)\binom{n}{2}$ edges, and $\overline{G}$ contains no bi-clique of size $4\frac{\log n}{\epsilon}$. By Lemma \ref{lemma:4partition}, $G$ can be realized as the intersection graph of convex sets and, therefore, by $x$-monotone curves.
\end{proof}

 In the rest of the section, we prove part (2) of Theorem \ref{thm:threshold}. For the proof, we use the following characterization of intersection graphs of $x$-monotone curves that intersect the same vertical line, which was established in \cite{PT18}.

 A graph $G_{<_{1},<_{2}}$  with two total orderings, $<_{1}$ and $<_{2}$, on its vertex set is called \emph{double-ordered}. If the orderings $<_{1},<_{2}$ are clear from the context, we shall write $G$ instead of $G_{<_{1},<_{2}}$.

 \begin{definition}
 	A double-ordered graph $G_{<_{1},<_{2}}$ is called \emph{magical} if for any three distinct vertices $a,b,c\in V(G)$ with $a<_{1}b<_{1}c$ the following is true: if $ab,bc\in E(G)$ and $ac\not\in E(G)$, then $b<_{2}a$ and $b<_{2}c$.
 	A graph $G$ is said to be {\em magical} if there exist two total orders $<_{1},<_{2}$ on $V(G)$ such that $G_{<_{1},<_{2}}$ is magical.
 \end{definition}

A \emph{triple-ordered} graph is a graph $G_{<_{1},<_{2},<_{3}}$ with three total orders $<_{1},<_{2},<_{3}$ on its vertex set.

\begin{definition}\label{2magic}
	A {\em triple-ordered graph} $G_{<_{1},<_{2},<_{3}}$ is called \emph{double-magical}, if there exist two magical graphs $G^{1}_{<_{1},<_{2}}$ and $G^{2}_{<_{1},<_{3}}$ on $V(G)$  such that  $E(G_{<_{1},<_{2},<_{3}})=E(G^{1}_{<_{1},<_{2}})\cap E(G^{2}_{<_{1},<_{3}})$. An {\em unordered graph} $G$ is said to be {\em double-magical} if there exist three total orders $<_{1},<_{2},<_{3}$ on $V(G)$ such that the triple-ordered graph $G_{<_{1},<_{2},<_{3}}$ is double-magical.
\end{definition}

\begin{lemma}(Pach, Tomon \cite{PT18})\label{dmagical2}
	A graph is double-magical if and only if it isomorphic to the complement of the intersection graph of a collection of $x$-monotone curves, each of which intersects a vertical line $l$.
\end{lemma}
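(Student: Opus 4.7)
The lemma is an iff whose two directions are of very different character.

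For the easy direction $(\Leftarrow)$, given a family $\mathcal{C}$ of $x$-monotone curves each crossing a common vertical line $l$, I would define three orderings on $V(G)=\mathcal{C}$: let $<_1$ order curves by the $y$-coordinate of their intersection point with $l$; let $<_3$ order curves by the $x$-coordinate of the right endpoint; and let $<_2$ order curves by the $x$-coordinate of the left endpoint, reversed so that the curve with left endpoint closest to $l$ is smallest. Cut each curve at $l$ into a right piece (grounded on $l$ in the right half-plane) and a left piece (reflectively grounded), and let $G^1, G^2$ be the complements of the intersection graphs of the left pieces and right pieces respectively. Two curves miss each other iff both corresponding pieces do, so $E(G)=E(G^1)\cap E(G^2)$. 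It then remains to verify that $G^2_{<_1,<_3}$ is magical; the argument for $G^1_{<_1,<_2}$ is symmetric. Take $a<_1 b<_1 c$ such that the right pieces of $\{a,b\}$ and $\{b,c\}$ are disjoint but those of $\{a,c\}$ meet at some $x^{*}$. By $x$-monotonicity and non-intersection, the right piece of $b$ lies strictly between those of $a$ and $c$ in $y$-order on its entire domain. But at $x^{*}$ the right pieces of $a$ and $c$ have equal $y$-coordinate, so $b$ cannot reach $x^{*}$; hence $b$'s right-endpoint $x$-coordinate is strictly less than those of $a$ and $c$, giving $b<_3 a$ and $b<_3 c$.

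For the harder direction $(\Rightarrow)$, the plan is to reduce to the following one-sided claim: every magical graph $H_{<_1,<_2}$ is the complement of the intersection graph of some family of grounded $x$-monotone curves in the right half-plane, with $<_1$ matching the ordering of $y$-coordinates on the $y$-axis and $<_2$ matching the ordering of right-endpoint $x$-coordinates. Granted this, a double-magical graph with decomposition $G=G^1_{<_1,<_2}\cap G^2_{<_1,<_3}$ is realized by applying the one-sided claim directly to $G^2$ (producing right pieces) and, after reflecting across $l$, to $G^1$ (producing left pieces). Because both halves use the common ordering $<_1$ for the $y$-coordinates along $l$, the right piece and left piece for each vertex $v$ can be glued at their common endpoint on $l$ to form one $x$-monotone curve crossing $l$; the non-intersection relation of the resulting family is then $E(G^1)\cap E(G^2)=E(G)$, as desired.

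The core task is therefore to prove the one-sided claim, which I would attempt by induction on $n=|V(H)|$. Let $v$ be the $<_2$-minimum vertex, so that $C_v$ must have the smallest right endpoint. By induction, realize $H-v$ by grounded curves $\{C_u : u\neq v\}$, and then insert a new curve $C_v$ starting at $(0, y)$ for the appropriate $<_1$-rank $y$ and terminating at some very small $x>0$, arranged so that $C_v$ crosses $C_u$ precisely for those $u$ with $uv\notin E(H)$. At each $x$ in its short domain, $C_v$ must lie in a specific strip between those $C_u$ still alive, determined by which of the prescribed crossings have already occurred; the magical property of $H$ translates into the compatibility of these strip constraints as $x$ grows, ruling out any scenario in which $C_v$ would be forced to cross a curve it must avoid. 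I expect this routing argument, based on repeated application of the magical inequality to well-chosen triples $(a,b,c)$, to be the main technical obstacle. Once a valid polygonal routing is produced, smoothing it to a genuine $x$-monotone curve is routine, and the resulting family of grounded curves realizes $H$ as the complement of its intersection graph.
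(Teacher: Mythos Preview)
The present paper does not prove this lemma; it is quoted from \cite{PT18} without argument, so there is no proof here against which to compare your proposal.

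Your outline is nonetheless reasonable. The $(\Leftarrow)$ direction is correct: cutting at $l$, taking the disjointness graphs of the two halves, and verifying the magical property for each half via the ``$b$ is trapped between $a$ and $c$, hence terminates before they meet'' argument works exactly as you describe. For $(\Rightarrow)$, reducing to a one-sided realization lemma for magical graphs---with $<_1$ controlling the order along the grounding line and $<_2$ controlling the order of right-endpoint $x$-coordinates---and then gluing the two realizations at $l$ is the natural strategy, and the gluing is legitimate precisely because both halves share $<_1$.

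The one part you have not actually carried out is the inductive routing of the $<_2$-minimal curve in the one-sided claim. You correctly flag this as the main obstacle, but your sketch does not yet demonstrate that the magical condition, applied to the relevant triples, is enough to guarantee that the prescribed strip for $C_v$ remains nonempty and never forces an unwanted crossing as $x$ increases. That step is where the genuine content lies, so as written your $(\Rightarrow)$ direction is a plausible plan rather than a proof; to complete it you would either need to spell out that routing argument in full or defer to \cite{PT18}.
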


Relying on this characterization, part (2) of Theorem \ref{thm:threshold} reduces to the following lemma about double-magical graphs.

\begin{lemma}\label{lemma:2magical}
	For any $\epsilon>0$, there exists a constant $c=c(\epsilon)>0$ with the following property. For every positive integer $n$, every double-magical graph with $n$ vertices and at least $(\frac{3}{4}+\epsilon)\binom{n}{2}$ edges contains a bi-clique of size $cn$.
\end{lemma}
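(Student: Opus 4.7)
Let $G^1_{<_1,<_2}$ and $G^2_{<_1,<_3}$ be the two magical graphs witnessing the double-magical structure on $G$, so $E(G)=E(G^1)\cap E(G^2)$; in particular $|E(G^i)|\ge|E(G)|\ge(\tfrac34+\epsilon)\binom{n}{2}$. For each $v\in V$, write $L_v=\{a:\ a<_1 v,\ av\in E(G)\}$ and $R_v=\{c:\ v<_1 c,\ vc\in E(G)\}$, and for $i\in\{2,3\}$ set $L_v^i=\{a\in L_v:\ a<_i v\}$, $R_v^i=\{c\in R_v:\ c<_i v\}$. The contrapositive of magicality for $G^1$ says that whenever $a<_1 v<_1 c$, $av,vc\in E(G^1)$, and $a<_2 v$ (so $v>_2 a$), one must have $ac\in E(G^1)$; the analogous statement holds for $G^2$ with $<_3$. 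Combining these two implications, each of
\[
(L_v^2,R_v^3),\qquad (L_v^3,R_v^2),\qquad (L_v^2\cap L_v^3,\ R_v),\qquad (L_v,\ R_v^2\cap R_v^3)
\]
is a bi-clique in $G$. So it suffices to exhibit a single vertex $v$ for which one of these four pairs has both sides of size at least $cn$.

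To locate such a pivot, classify every edge $\{a,b\}\in E(G)$ with $a<_1 b$ by the pair of signs $(\sigma,\tau)\in\{+,-\}^2$ where $\sigma=+$ iff $a<_2 b$ and $\tau=+$ iff $a<_3 b$, and let $N_{\sigma\tau}$ denote the resulting counts. Then $N_{++}+N_{+-}+N_{-+}+N_{--}=|E(G)|\ge(\tfrac34+\epsilon)\binom{n}{2}$, and double-counting yields identities such as
\[
\sum_v|L_v^2\cap L_v^3|=N_{++},\quad \sum_v|R_v^2\cap R_v^3|=N_{--},\quad \sum_v|L_v|=\sum_v|R_v|=|E(G)|,
\]
with analogous sums for the remaining set sizes. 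By pigeonhole one of the four $N_{\sigma\tau}$ is of order $n^2$, and by the symmetry between the four bi-clique types we may assume it is $N_{++}$. The plan is then to bound below the triple count
\[
T=\sum_v|L_v^2\cap L_v^3|\cdot|R_v|=\#\{(a,v,c):\ a<_1 v<_1 c,\ a<_2 v,\ a<_3 v,\ av,vc\in E(G)\},
\]
show $T=\Omega(n^3)$ using the density surplus $\epsilon$, and extract from the averaging inequality $\max_v|L_v^2\cap L_v^3|\cdot|R_v|\ge T/n$ a pivot $v$ realizing the third bi-clique at linear size.

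The main obstacle is precisely this last step: averaging controls the product of the two sides but the bi-clique size is governed by their minimum, and a priori the two sides could be anti-correlated over $v$. Since the extremal construction from part~(1) of Theorem~\ref{thm:threshold} attains the threshold density $3/4$ exactly, any argument must use the slack $\epsilon$ in an essential way. I expect to resolve this by one of two standard routes: a density-increment argument that either outputs the pivot directly or passes to a large induced subgraph with a strictly larger fraction of $N_{++}$-edges (permitting recursion); or a stability-type analysis showing that any double-magical graph of density close to $3/4$ must be structurally close to the four-cliques example, so that the $\epsilon$-deviation introduces enough additional edges to guarantee a pivot yielding a linear bi-clique through one of the four anchor constructions.
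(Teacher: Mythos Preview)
Your structural observations are correct: for each vertex $v$, the pairs $(L_v^2\cap L_v^3,\,R_v)$, $(L_v,\,R_v^2\cap R_v^3)$, $(L_v^2,\,R_v^3)$, $(L_v^3,\,R_v^2)$ are indeed bi-cliques in $G$, exactly by the contrapositive of magicality applied separately in $G^1$ and $G^2$. However, the proposal is not a proof: you explicitly leave open the step that turns a lower bound on a \emph{sum of products} $\sum_v |L_v^2\cap L_v^3|\cdot |R_v|$ into a single vertex with both factors linear. Your suggested fixes---density increment on the fraction of $N_{++}$-edges, or a stability comparison with the four-cliques example---are not carried out, and neither is routine here. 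For density increment you would need an operation (passing to an induced subgraph) that strictly increases a potential while preserving the hypothesis, and it is unclear what potential simultaneously controls both sides of the anchored bi-clique. For stability you would have to prove a structure theorem for double-magical graphs near density $3/4$, which is itself nontrivial.

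The paper proceeds quite differently and avoids this averaging problem altogether. It introduces \emph{forcing $4$-tuples} $(a,b,b',c)$ with $a<_1 b<_1 c$ and $a<_1 b'<_1 c$ (allowing $b\neq b'$), where $(a,b,c)$ is not a $2$-hole and $(a,b',c)$ is not a $3$-hole; whenever $ab,bc,ab',b'c\in E(G)$, forcing implies $ac\in E(G)$. The key combinatorial fact---checked by computer over all $(5!)^2$ triple-orderings of five points---is that \emph{every} $5$-element set contains a forcing $4$-tuple. Since $3/4$ is precisely the Tur\'an density of $K_5$, the hypothesis $|E(G)|\ge(\tfrac34+\epsilon)\binom{n}{2}$ yields $\Omega_\epsilon(n^5)$ copies of $K_5$ via Erd\H os--Simonovits supersaturation, hence $\Omega_\epsilon(n^4)$ forcing $K_4$'s of a fixed order type (or $\Omega_\epsilon(n^3)$ forcing $K_3$'s). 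Fixing the \emph{pair} of middle vertices $(b,b')$ then gives $\Omega_\epsilon(n^2)$ pairs $(a,c)$, from which linear-sized sets $A$ and $C$ with $A\times C\subset E(G)$ follow immediately. The crucial difference from your single-pivot scheme is the use of two separate middle vertices $b,b'$: this extra degree of freedom is exactly what makes the ``every $5$-set contains a forcing tuple'' claim true (it fails for single pivots already on four points), and it is what ties the argument to the sharp threshold $3/4$.
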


\begin{proof}
	Let $G_{<_{1},<_{2}}$ and $G_{<_{1},<_{3}}$ be magical graphs such that $E(G)=E(G^{1}_{<_{1},<_{2}})\cap E(G^{2}_{<_{1},<_{3}})$.
	
	
	A triple of vertices $(a,b,c)$ in $G$ is called an {\em $i$-hole} for $i=2,3$, if $a<_{1}b<_{1}c$ and $b<_{i}a$ and $b<_{i}c$.
A $4$-tuple $(a,b,b',c)$ of vertices of $G$ is said to be \emph{forcing} if
	\begin{enumerate}
		\item $a<_{1}b<_{1}c$,
		\item $a<_{1}b'<_{1}c$,
		\item $(a,b,c)$ is not a $2$-hole,
		\item $(a,b',c)$ is not a $3$-hole.
	\end{enumerate}
	  Note that we do not exclude that $b=b'$. If $(a,b,b',c)$ is forcing, we say that the {\em set} $\{a,b,b',c\}$ is also {\em forcing}. We are interested in forcing $4$-tuples for the following reason: if $(a,b,b',c)$ is forcing such that $ab,bc,ab',b'c$ are edges of $G$, then $ac$ is also an edge. Indeed, if $ab,bc$ are edges of $G$, then $ab,bc$ are edges of $G_{<_{1},<_{2}}$. In this case, as $G_{<_{1},<_{2}}$ is magical and $(a,b,c)$ is not a $2$-hole, $ac$ is also an edge of $G_{<_{1},<_{2}}$. Similarly, $ab',b'c$ are edges of $G_{<_{1},<_{3}}$. Then, as $G_{<_{1},<_{3}}$ is magical and $(a,b',c)$ is not a 3-hole, $ac$ is also an edge of $G_{<_{1},<_{3}}$. Therefore, $ac$ is an edge of $G$ as well.
	
	  Let the {\em order type} of a $4$-tuple $(a,b,b',c)$ of vertices be $(s_{1},s_{2},s_{3},s_{4})$, where\\
	 $$s_{1}=\begin{cases}+ &\mbox{if } a<b\\ - &\mbox{if } b<a\end{cases},\;\;
s_{2}=\begin{cases}+ &\mbox{if } b<c\\ - &\mbox{if } c<b\end{cases},\;\;
s_{3}=\begin{cases}+ &\mbox{if } a<b'\\ - &\mbox{if } b'<a\end{cases},\;\; \mbox{and }\;\;
s_{4}=\begin{cases}+ &\mbox{if } b'<c\\ - &\mbox{if } c<b'\end{cases}.$$\\
	 Note that if $(a,b,b',c)$ is a $4$-tuple such that $a<_{1}b<_{1}c$ and $a<_{1}b'<_{1}c$, and the  order type of $(a,b,b',c)$ is $(s_{1},s_{2},s_{3},s_{4})$, then $(a,b,b',c)$ is not forcing if and only if $(s_{1},s_{2})=(-,+)$, or $(s_{3},s_{4})=(-,+)$.

	\begin{claim}
		Every set of $5$ vertices in $G$ contains a forcing $4$-tuple.
	\end{claim}
	\begin{proof}
		There are $(5!)^{2}=14400$ non-isomorphic triple orderings of a $5$ elements set, so it is sufficient to show that each of them contains a forcing $4$-tuple. A quick computer search shows that this is indeed the case.
	\end{proof}
    As usual, let $K_{t}$ denote the complete graph on $t$ vertices. By a well known result of Erd\H{o}s and Simonovits \cite{ES}, the condition $|E(G)|\geq (1-\frac{1}{4}+\epsilon)\binom{n}{2}$ implies that $G$ contains at least $c_{0}n^{5}$ copies of the complete graph $K_{5}$, where $c_{0}=c_{0}(\epsilon)$ depends only on $\epsilon$.
	
	Now each copy of $K_{5}$ in $G$ contains a forcing $4$-tuple, which spans either a copy of $K_{4}$ or $K_{3}$ in $G$ (depending on whether $b=b'$). There are $16$ order types of $4$-tuples. Hence, there is  an order type $\tau$ such that at least $c_{0}n^{4}/32$ copies of $K_{4}$ in $G$ are forcing with order type $\tau$,  or at least $c_{0}n^{3}/32$ copies of $K_{3}$ in $G$ are forcing with order type  $\tau$.	
	
	In the first case, we deduce that there exists a pair of vertices $(b,b')$ in $G$ such that $b\neq b'$, and there are at least $c_{0}n^{2}/32$ pairs of vertices $(a,c)$ such that $(a,b,b',c)$ is forcing with order type $\tau$, and $\{a,b,b',c\}$ spans a copy of $K_{4}$. Let $A$ be the set of vertices $a$ that appear in such a forcing $4$-tuple $(a,b,b',c)$, and let $C$ be the set of vertices $c$ that appear in such a forcing $4$-tuple $(a,b,b',c)$. Then $|A||C|\geq c_{0}n^{2}/32$, so $|A|,|C|\geq c_{0}n/32$. If $a_{0}\in A$, there exists $c\in C$ such that $\{a_{0},b,b',c\}$ spans a copy of $K_{4}$, so $a_{0}$ is joined to $b$ and $b'$ by an edge. Similarly, every $c_{0}\in C$ is also connected to $b$ and $b'$ by an edge. Finally, for every $a_{0}\in A$ and $c_{0}\in C$, the $4$-tuple $(a_{0},b,b',c_{0})$ has order type $\tau$. But whether a $4$-tuple is forcing depends only on its order type, so $(a_{0},b,b',c_{0})$ is forcing. Hence, $a_{0}c_{0}$ is an edge for every $a_{0}\in A$ and $c_{0}\in C$, so $A\cup C$ spans a bi-clique of size at least $c_{0}n/32$.
	
	In the second case, there exist a vertex $b$ and at least $c_{0}n^{2}/32$ pairs of vertices $(a,c)$ such that $(a,b,b,c)$ is forcing with order type $\tau$, and $\{a,b,c\}$ spans a copy of $K_{3}$. Now we can proceed in the same way as in the previous case to find a bi-clique of size $c_{0}n/32$.	
	
	Hence, Lemma~\ref{lemma:2magical} holds with $c=c_{0}/32$.
\end{proof}

\begin{corollary}\label{cor:line}
	For any $\epsilon>0$, there exist a constant $c=c(\epsilon)>0$ and an integer $n_0=n_0(\epsilon)$ such that the following statement is true. For any $n\ge n_0$ $x$-monotone curves that intersect the same vertical line, if the intersection graph $G$ of the curves has at most $(\frac{1}{4}-\epsilon)\binom{n}{2}$ edges, then the complement of $G$ contains a bi-clique of size $cn$.
\end{corollary}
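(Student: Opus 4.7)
The plan is to derive the corollary by directly combining Lemma~\ref{dmagical2} with Lemma~\ref{lemma:2magical}: the corollary is essentially the dual reformulation of Lemma~\ref{lemma:2magical} under the characterization provided by Lemma~\ref{dmagical2}. Let $\mathcal{C}$ be a collection of $n$ $x$-monotone curves that all intersect a common vertical line, and let $G$ be the intersection graph. First, I would invoke Lemma~\ref{dmagical2} to conclude that the complement $\overline{G}$ is a double-magical graph.

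Second, I would translate the edge-count hypothesis into the regime covered by Lemma~\ref{lemma:2magical}. The assumption $|E(G)| \le \bigl(\tfrac14-\epsilon\bigr)\binom{n}{2}$ implies
$$|E(\overline{G})| \;=\; \binom{n}{2} - |E(G)| \;\ge\; \left(\tfrac34+\epsilon\right)\binom{n}{2}.$$
Third, I would apply Lemma~\ref{lemma:2magical} to $\overline{G}$ with the same parameter $\epsilon$, which yields a bi-clique of size $c(\epsilon)n$ in $\overline{G}$. This is precisely the conclusion of the corollary. The constant $c$ is the one furnished by Lemma~\ref{lemma:2magical}, and $n_0$ can be taken to be any integer large enough to make $c(\epsilon) n \ge 1$ so the statement is nonvacuous (in fact the proof works for every $n$, but stating $n_0$ keeps the format uniform with Theorem~\ref{thm:threshold}).

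There is no real obstacle: the substantive content lies entirely in the two cited lemmas. Lemma~\ref{dmagical2} encodes the geometric structure of curves meeting a common vertical line as a double-magical ordering of the complement graph, and Lemma~\ref{lemma:2magical} extracts a linear-sized bi-clique from any sufficiently dense double-magical graph via the forcing $4$-tuple argument together with the Erd\H{o}s--Simonovits supersaturation bound on $K_5$. Once these are in hand, the corollary is just a one-line application.
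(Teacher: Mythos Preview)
Your proposal is correct and follows exactly the same route as the paper's own proof: apply Lemma~\ref{dmagical2} to see that $\overline{G}$ is double-magical, pass to the complement to get at least $(\tfrac34+\epsilon)\binom{n}{2}$ edges, and then invoke Lemma~\ref{lemma:2magical}. There is nothing to add.
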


\begin{proof}
	By Lemma \ref{dmagical2}, the complement of $G$ is a double-magical graph. Since $\overline{G}$ has at least $(\frac{3}{4}+\epsilon)\binom{n}{2}$ edges, by Lemma \ref{lemma:2magical} it must contain a bi-clique of size $cn$.
\end{proof}

Similarly as in the proof of Theorem \ref{thm:curves}, we complete the proof of Theorem \ref{thm:threshold} by reducing the general configuration of $x$-monotone curves to the case, where every $x$-monotone curve has nonempty intersection with the same vertical line $l$.

 \begin{proof}[Proof of Theorem \ref{thm:threshold}, part (2)]
 	Without loss of generality, assume that $\epsilon<1/2$. Let $\mathcal{C}$ denote our collection of curves. For each $\alpha\in\mathcal{C}$, let  $r(\alpha)$ be the $x$-coordinate of the right endpoint of $\alpha$. We can also suppose that $r(\alpha)\neq r(\alpha')$  for $\alpha\neq \alpha'$. Let $\alpha_{1},\dots,\alpha_{n}$ be the enumeration of the curves in $\mathcal{C}$ such that $r(\alpha_{1})<\dots<r(\alpha_{n})$.
 	
 	Set $m=\epsilon n/2$ and consider a vertical line $l=\{x=r\}$, where $r(\alpha_{m})<r<r(\alpha_{m+1})$. Let $\mathcal{C}'$ denote the set of curves in $\mathcal{C}$ which have a nonempty intersection with $l$. We distinguish two cases.
 	
 	{\em Case 1:} $|\mathcal{C}'|\geq (1-\epsilon)n$. Let $G'$ be the intersection graph of $\mathcal{C}'$. Then $G'$ has at most $\left(\frac{1}{4}-\frac{\epsilon}{4}\right)\binom{|V(G')|}{2}$ edges. Therefore, the complement of $G'$ contains a bi-clique of size $c(\frac{\epsilon}{4})|V(G')|>c(\frac{\epsilon}{4})n/2$, where $c$ is the constant defined in Corollary \ref{cor:line}.
 	
 	{\em Case 2:} $|\mathcal{C}'|<(1-\epsilon)n$. Let $A=\{C_{i}:i\leq m\}$ and $B=\mathcal{C}\setminus(A\cup \mathcal{C}')$. Then $|B|\geq \epsilon n/2$, and no curve in $A$ intersects any curve in $B$, because $A$ and $B$ are separated by $l$. Hence, $\overline{G}$ contains a bi-clique of size $ \epsilon n/2$.
 \end{proof}

\end{document}